\definecolor{darkgreen}{rgb}{0,0.5,0}
\newcommand\sB{{\mathcal B}}
\DeclareMathOperator{\Pic}{Pic}
\DeclareMathOperator{\Def}{Def}
\DeclareMathOperator{\kod}{kod}
\newcommand{\CC}{\ensuremath{\mathbb{C}}}
\newcommand{\ZZ}{\ensuremath{\mathbb{Z}}}
\newcommand{\NN}{\ensuremath{\mathbb{N}}}
\newcommand{\hol}{\ensuremath{\mathcal{O}}}
\newcommand{\PP}{\ensuremath{\mathbb{P}}}
\newcommand{\ra}{\ensuremath{\rightarrow}}
\def\eea{\end{eqnarray*}}
\def\bea{\begin{eqnarray*}}
\newcommand\dual{\mathrel{\raise3pt\hbox{$\underline{\mathrm{\thinspace d
\thinspace}}$}}}
\newcommand\qe{\ifhmode\unskip\nobreak\fi\quad $\Box$}       
\def\BOX{\hfill\lower.5\baselineskip\hbox{$\Box$}}
\newtheorem{theorem}{Theorem}[section]
\newtheorem{lemma}[theorem]{Lemma}
\newtheorem{corollary}[theorem]{Corollary}
\newtheorem{proposition}[theorem]{Proposition}
\newtheorem*{theorem*}{Theorem}
\newtheorem*{problem*}{Problem}
\newtheorem*{question*}{Question}
\theoremstyle{remark}
\newtheorem{remark}[theorem]{Remark}
\theoremstyle{definition}
\newtheorem{definition}[theorem]{Definition}
\newtheorem{notation}[theorem]{Notation}
\newcommand{\cl}{\ensuremath{\mathcal{L}}}
\newcommand{\oo}{\ensuremath{\mathcal{O}}}
\numberwithin{equation}{section}
\newcounter{nootje}
\renewcommand\check[1]
\begin{document}

\title[Rigid but not infinitesimally rigid]{Rigid but not infinitesimally rigid compact complex manifolds}

\author{Ingrid Bauer}
\address{Mathematisches Institut,
         Universit\"at Bayreuth,
         95440 Bayreuth, Germany.}
\email{Ingrid.Bauer@uni-bayreuth.de}

\author{Roberto Pignatelli}
\address{Dipartimento di Matematica,
	Universit\`a di Trento,
	via Sommarive 14,
	I-38123 Trento, Italy.}
\email{Roberto.Pignatelli@unitn.it}
\date{\today}
\thanks{
\textit{2010 Mathematics Subject Classification}: 14B12, 14L30, 14J10, 14J29, 14E20, 
32G05.\\
\textit{Keywords}: Rigid complex manifolds, branched or unramified coverings, deformation theory. \\
Both authors thank Fabrizio Catanese for many useful mathematical discussions, the second author is grateful to Fabrizio Catanese for inviting him to Bayreuth with the ERC - 2013 - Advanced Grant - 340258 - TADMICAMT; this research took place during his visit.
He  is  a  member  of  GNSAGA - INdAM and  was partially  supported  by  the  project  PRIN 2015 Geometria delle 
variet\`a algebriche.\\
Both authors are grateful to Davide Frapporti for  figures \ref{fig:omega1} and \ref{fig:omega2}.\\
The authors are indebted to the referees for several suggestions improving the presentation of the paper, and especially for the proof of Lemma \ref{wellknown}, which is much simpler than the original one}

 \makeatletter
    \def\@pnumwidth{2em}
  \makeatother

\begin{abstract}
The aim of this paper is to  give for each dimension $d \geq 2$  an infinite series of rigid compact complex manifolds which are not infinitesimally rigid, hence to give an exhaustive answer to a problem of Morrow and Kodaira stated in the famous book {\em Complex manifolds}.
\end{abstract}

\maketitle

\addtocontents{toc}{\protect\setcounter{tocdepth}{1}}

\tableofcontents

\section*{Introduction}

In the famous book {\em Complex manifolds} by J. Morrow and K. Kodaira the following problem is posed

\begin{problem*}\cite{kodairamorrow}*{p. 45}
Find an example of a (compact complex manifold) $M$ which is rigid, but $H^1(M, \Theta) \neq 0$. (Not easy?)
\end{problem*}

A  compact complex manifold is {\em rigid} if it has no nontrivial (small) deformations (cf. Definition \ref{rigid}). Moreover, recall that a complex manifold $M$ is called {\em infinitesimally rigid}, if 
$H^1(M, \Theta_M) = 0$, and that, by Kuranishi theory, infinitesimal rigidity implies rigidity (cf. \cite{kodairamorrow}*{Theorem 3.2}). 

The above Problem asks for examples of compact complex manifolds which are rigid, but not infinitesimally rigid, showing that the converse of \cite{kodairamorrow}*{Theorem 3.2} does not hold.

To our knowledge this problem is up to now unsolved and the aim of our article is to give an infinite series of such examples for each dimension $d \geq 2$.

In \cite{rigidity} several different notions of rigidity (cf. Definition \ref{rigid}, where we repeat the notions which are relevant for our purposes) have been recalled and newly introduced and their relations have been studied. 

It is wellknown that in dimension 1 all concepts of rigidity coincide and that the only rigid curve is $\PP^1$.

In dimension 2, in \cite{rigidity} the following was proven  (in the slightly more general context of compact complex surfaces):

\begin{theorem*}
Let $S$ be a smooth projective  surface, which is  rigid. Then either
\begin{enumerate}
\item $S$ is a minimal surface of general type, or
\item $S$ is a Del Pezzo surface of degree $d \geq 5$.
\end{enumerate}
\end{theorem*}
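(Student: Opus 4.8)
\emph{Proof proposal.} The plan is to run through the Enriques--Kodaira classification of smooth projective surfaces by Kodaira dimension $\kappa(S)$, producing a non-trivial small deformation of $S$ whenever $S$ lies outside the two listed families. The recurring mechanism is that a rigid surface can support no continuous moduli of any kind -- neither those of an Albanese torus, nor those of an Iitaka (elliptic) fibration, nor those of a configuration of blown-up points.

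First I would treat $\kappa(S)=2$ and show that a rigid surface of general type is minimal, i.e. we are in case (1). If $S$ is of general type but not minimal, contract a $(-1)$-curve to get $\sigma\colon S\to S_1$ with $S=\mathrm{Bl}_pS_1$; since $\kappa(S_1)=2$, the group $\Aut(S_1)$ is finite. Blowing up a small disc $\{p_t\}_{t\in\Delta}$ of points through $p$ yields a smooth family $\mathcal{S}\to\Delta$ with $\mathcal{S}_0=S$ and $\mathcal{S}_t=\mathrm{Bl}_{p_t}S_1$; as the orbit of $p$ under the finite group $\Aut(S_1)$ is finite, $\mathcal{S}_t\not\cong S$ for $t$ generic, so $S$ is not rigid. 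For $\kappa(S)=0$ the minimal model is an abelian, a $K3$, an Enriques or a bielliptic surface, each with positive-dimensional moduli; deforming the minimal model and carrying along the (finitely many) centres of the blow-ups gives a deformation of $S$ which is non-trivial, since the minimal model -- reconstructed from $S$ -- changes. For $\kappa(S)=1$ the minimal model carries its canonical elliptic (Iitaka) fibration $f\colon S_{\mathrm{min}}\to B$, and one invokes Kodaira's classification of elliptic fibrations to see that $f$ always varies in a positive-dimensional family: if $g(B)\ge 1$ the base curve itself moves, while if $g(B)=0$ the numerical condition forcing $\kappa=1$ also forces enough singular and/or multiple fibres that the functional invariant (or the logarithmic-transform data) can be perturbed; either way the deformation lifts to $S$, which is therefore not rigid.

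Finally $\kappa(S)=-\infty$. If the irregularity $q(S)>0$, then $S$ is birationally ruled over a curve of genus $q\ge 1$, its Albanese variety is a non-trivial abelian variety and hence has moduli, and deforming it together with the ruling deforms $S$ non-trivially; so $q(S)=0$ and $S$ is rational. Whenever $S$ admits a birational morphism $g$ to $\PP^2$ -- in particular for $\PP^2$ and all its blow-ups -- one has $H^2(S,\Theta_S)=0$, because $R^1g_*\Theta_S=0$ and $g_*\Theta_S\subseteq\Theta_{\PP^2}$ has skyscraper cokernel, so that $H^2(\Theta_S)\cong H^2(\PP^2,\Theta_{\PP^2})=0$; hence such an $S$ is rigid if and only if $H^1(S,\Theta_S)=0$, and a direct computation shows this happens exactly for $\PP^2$ and for $\mathrm{Bl}_{p_1,\dots,p_k}\PP^2$ with $k\le 4$ points in general position, that is, for the Del Pezzo surfaces of degrees $9$, $8$ ($=\FF_1$), $7$, $6$ and $5$. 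The surface $\PP^1\times\PP^1$ -- which has no birational morphism to $\PP^2$ -- is handled by a separate direct computation and is the Del Pezzo surface of degree $8$; and every remaining rational surface is a non-trivial blow-up of a Hirzebruch surface $\FF_n$ with $n\ge 2$ not acquiring a birational morphism to $\PP^2$, hence contains an irreducible curve of self-intersection $\le -2$ -- deforming $\FF_n$ to $\FF_{n-2}$ while carrying the blown-up points along then produces a non-trivial deformation, so $S$ is not rigid. Collecting the rigid rational surfaces gives exactly the Del Pezzo surfaces of degree $\ge 5$, which is alternative (2).

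The genuinely laborious step is $\kappa(S)=1$: there is no uniform shortcut, and one must pass through Kodaira's table of elliptic fibrations -- isotrivial versus non-isotrivial, $g(B)\in\{0,1,\ge 2\}$, with or without multiple fibres -- to be certain that a positive-dimensional family of pairwise non-isomorphic deformations always exists. The other place needing real care, rather than a mere appeal to $H^1(\Theta)\neq 0$, is checking in the $\kappa=0$ case and in the last rational subcase that the families built by ``deforming the minimal model'' or ``bending a negative curve'' are honestly non-trivial -- i.e. that the general fibre is not isomorphic to $S$ -- which is exactly where the finiteness of automorphism groups of varieties of general type and the birational invariance of the minimal model do the work.
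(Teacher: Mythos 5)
First, a caveat on the comparison: the present paper does not prove this theorem at all --- it is quoted from \cite{rigidity} as background --- so there is no in-paper proof to measure you against; I can only compare your sketch with what such a proof must contain. Your overall plan (run through the Enriques--Kodaira classification by Kodaira dimension and exhibit a nontrivial small deformation outside the two listed classes) is the right strategy and is the one followed in the cited reference. The non-minimal general type case, the irregular ruled case and the rational case are essentially sound, modulo standard repairs: an isomorphism $\mathrm{Bl}_{p_t}S_1\cong\mathrm{Bl}_{p}S_1$ need not respect your chosen blow-down, so you should pass to the unique minimal model and use finiteness of the birational automorphism group rather than of $\Aut(S_1)$; and for ruled surfaces over a base $B$ of genus $\geq 1$ it is not enough to observe that $B$ moves --- you must lift the deformation to the surface (e.g.\ deform the pair consisting of the curve and the rank-two bundle, which is unobstructed over a curve, and carry the blow-up centres along).

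The genuine gap is the case $\kappa(S)=1$, which you yourself flag as the laborious step but then dispatch with two mechanisms that do not cover all cases. ``If $g(B)\geq 1$ the base curve itself moves'' does not by itself produce a deformation of $S$, since deformations of the base of a fibration do not automatically lift to the total space; and ``for $g(B)=0$ the functional invariant or the logarithmic-transform data can be perturbed'' fails precisely for the isotrivial properly elliptic surfaces whose base orbifold is rigid, e.g.\ quasi-bundles $(C\times E)/G$ fibred over $\PP^1$ with exactly three multiple fibres: there the $j$-invariant is constant (nothing to perturb), three points on $\PP^1$ cannot be moved modulo automorphisms, and if $C$ is a triangle curve the pair $(C,G)$ is rigid as well --- this is exactly the rigidity mechanism the present paper exploits to build its own surfaces, so it cannot be waved away. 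To close the $\kappa=1$ case one needs the structure theory of such fibrations and an argument that some remaining datum always deforms (for instance, that minimality forces the action on the elliptic fibre to be essentially by translations, so the fibre elliptic curve deforms together with the action, or an appeal to the known positive-dimensionality of moduli of properly elliptic surfaces); nothing of this sort appears in your sketch, so the assertion ``either way the deformation lifts to $S$'' is at present unsupported.
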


Del Pezzo surfaces  are  infinitesimally rigid, and rigid surfaces of general type are also globally rigid due to the existence of a moduli space.

The above result seems to suggest that the  property of rigidity puts strong restrictions on the Kodaira dimension of the manifold $X$, but if we go to higher dimensions this is no longer true. In fact, in \cite{rigidity} the following is shown:

\begin{theorem*}
For each $n \geq 3$ and for each $k = - \infty, 0,2, \dots n$ there is a rigid projective variety $X$ of dimension $n$ and Kodaira dimension $\kod (X) =k$. 
\end{theorem*}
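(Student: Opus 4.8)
The plan is to construct, for every admissible pair $(n,k)$, a smooth projective variety of dimension $n$ and Kodaira dimension $k$ which is \emph{infinitesimally} rigid; by Kuranishi theory (recalled above) this already gives rigidity. The construction uses three ``atomic'' examples and two ways of combining them. \textbf{The atoms are:} (a) projective space $\PP^n$, for which $H^1(\PP^n,\Theta)=0$ is classical and $\kod(\PP^n)=-\infty$; (b) a smooth compact ball quotient $Z_n=\BB^n/\Gamma$, with $\Gamma\subset\mathrm{PU}(n,1)$ a torsion-free cocompact lattice: it is of general type with $\kod(Z_n)=n$, has no global holomorphic vector fields, and is infinitesimally rigid for $n\geq 2$ because such lattices are locally rigid (Weil, Calabi--Vesentini) --- the excluded value $n=1$ being exactly why curves of genus $\geq 2$ deform; (c) for every $n\geq 3$, a rigid Calabi--Yau $n$-fold $W_n$, i.e. $K_{W_n}\cong\mathcal O_{W_n}$ and $H^1(W_n,\Theta_{W_n})\cong H^1(W_n,\Omega^{\,n-1}_{W_n})=0$, whose existence is known (e.g. by iterated crepant resolutions, in the style of Cynk--Hulek, of quotients of an $n$-fold self-product of an elliptic curve with complex multiplication). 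Taking $\PP^n$, $W_n$, $Z_n$ settles $k\in\{-\infty,0,n\}$; note that here the hypothesis $n\geq 3$ is forced, since by the surface theorem recalled above there is no rigid projective surface of Kodaira dimension $0$.

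\textbf{Products.} For $2\leq k\leq n-3$ set $d:=n-k\geq 3$ and take $X:=Z_k\times W_d$ with $Z_k$ a rigid general-type $k$-fold as in (b). From $\Theta_X=\mathrm{pr}_1^*\Theta_{Z_k}\oplus\mathrm{pr}_2^*\Theta_{W_d}$ and Künneth,
\[
H^1(X,\Theta_X)=H^1(Z_k,\Theta_{Z_k})\;\oplus\;\big(H^0(Z_k,\Theta_{Z_k})\otimes H^1(W_d,\mathcal O)\big)\;\oplus\;H^1(W_d,\Theta_{W_d})\;\oplus\;\big(H^1(Z_k,\mathcal O)\otimes H^0(W_d,\Theta_{W_d})\big),
\]
and all four summands vanish: the first and third by rigidity of the atoms, the second because $Z_k$ is of general type (so $H^0(Z_k,\Theta_{Z_k})=0$), the fourth because $H^0(W_d,\Theta_{W_d})=H^0(W_d,\Omega^{\,d-1}_{W_d})=0$ for a Calabi--Yau manifold with $q=0$. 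Moreover $\kod(X)=\kod(Z_k)+\kod(W_d)=k$.

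\textbf{Isotrivial abelian bundles.} The remaining cases $d\in\{1,2\}$, i.e. $k\in\{n-1,n-2\}$, cannot be obtained from a product --- on the atoms the ``deficit'' $\dim-\kod$ takes the values $0$, $\geq 3$, $+\infty$, and it is additive under products, so $1$ and $2$ are unreachable. I would instead do the following. Fix a rigid general-type manifold $B$ of dimension $k$ with $q(B)=0$ admitting a connected étale Galois cover $\rho\colon Z\to B$ with group $G=\ZZ/3$ if $d=1$ (resp. $G=\ZZ/5$ if $d=2$) and with $q(Z)=0$; such $B$ exist, e.g. among suitable ball quotients, or fake projective spaces / Beauville-type surfaces carrying the right torsion in $\pi_1$ whose relevant covers have vanishing irregularity. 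Let $A$ be the elliptic curve with an automorphism of order $3$ if $d=1$, resp. a polarized abelian surface with complex multiplication by $\QQ(\zeta_5)$ and a \emph{primitive} CM type if $d=2$; in either case $G$ acts on $A$ by automorphisms fixing the origin, and the eigenvalues $\mu_1,\dots,\mu_d$ of a generator of $G$ on $T_0A$ satisfy $\mu_i\mu_j\neq 1$ for all $i,j$. Put $X:=(Z\times A)/G$ for the diagonal action; since $G$ acts freely on $Z$, it acts freely on $Z\times A$, so $X$ is a smooth projective $n$-fold with $H^\bullet(X,\Theta_X)=H^\bullet(Z\times A,\Theta_{Z\times A})^G$. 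The projection $\pi\colon X\to B$ is an isotrivial bundle with fibre $A$, whose relative dualising sheaf is the pullback of the $G$-torsion line bundle on $B$ attached to the character $\prod_i\mu_i^{-1}$; hence $K_X=\pi^*(K_B\otimes(\text{torsion}))$ and $\kod(X)=\kod(B)=k$. By Künneth, $H^1(Z\times A,\Theta_{Z\times A})=H^1(Z,\Theta_Z)\oplus\big(H^0(Z,\Theta_Z)\otimes H^1(A,\mathcal O)\big)\oplus H^1(A,\Theta_A)\oplus\big(H^1(Z,\mathcal O)\otimes H^0(A,\Theta_A)\big)$, and the $G$-invariants vanish: the first by rigidity of $Z$, the second since $H^0(Z,\Theta_Z)=0$, the fourth since $q(Z)=0$, and the third because $H^1(A,\Theta_A)^G\cong\big(\overline{T_0A}^{\,\ast}\otimes T_0A\big)^G=0$ precisely by the condition $\mu_i\mu_j\neq1$. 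Thus $H^1(X,\Theta_X)=0$.

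\textbf{Where the difficulty lies.} The bookkeeping above is routine once the ingredients are in hand; the genuine content is in two quoted ``input'' facts: the infinitesimal rigidity of smooth ball quotients in all dimensions $\geq 2$ (the deep local rigidity of lattices in $\mathrm{PU}(n,1)$), and the existence of rigid Calabi--Yau $n$-folds for every $n\geq 3$. On the technical side, the one delicate point is in the third step --- producing, in each dimension $k$, a rigid general-type base $B$ with $q=0$ that carries an étale $\ZZ/3$- or $\ZZ/5$-cover with vanishing irregularity, i.e. matching the torsion available in fundamental groups of rigid manifolds of general type with the cyclic automorphism groups of CM (abelian) varieties --- together with the (standard) identification of the relative canonical bundle of an isotrivial abelian fibration with a pulled-back torsion line bundle, which gives the Kodaira-dimension count.
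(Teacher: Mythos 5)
First, a point of reference: this paper does not prove the quoted theorem at all — it is cited from \cite{rigidity}, where the construction is quite different from yours: Bauer--Catanese obtain all the listed Kodaira dimensions from free diagonal actions on products of curves (triangle curves of genus $\geq 2$ for the ``general type directions'', the equianharmonic elliptic curve with a $\ZZ/3$-action by multiplication for the ``Kodaira dimension $0$ directions''), and infinitesimal rigidity comes from the rigidity of each curve-with-group-action via K\"unneth. Your route via ball quotients (Calabi--Vesentini), rigid Calabi--Yau $n$-folds (Cynk--Hulek) and products is a legitimate alternative for $k=-\infty,0,n$ and $2\leq k\leq n-3$: those steps are correct, modulo two small points — for the vanishing of $H^0(\Theta_{W_d})$ you need $h^{d-1,0}(W_d)=0$, i.e.\ a \emph{strict} Calabi--Yau, not merely $q=0$ (true for the Cynk--Hulek examples, but your parenthetical justification is off for $d\geq 4$), and in the bundle step the first K\"unneth summand vanishes because $H^1(\Theta_Z)^G\cong H^1(\Theta_B)=0$, not ``by rigidity of $Z$'', which you never arranged.

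The genuine gap is exactly where you locate it, and flagging it does not close it: the cases $k=n-1,\,n-2$ rest on the existence, in \emph{every} dimension $k\geq 2$, of an infinitesimally rigid general-type manifold $B$ with $q(B)=0$ admitting a connected \'etale $\ZZ/3$- (resp.\ $\ZZ/5$-) cover $Z$ with $q(Z)=0$. This is not an off-the-shelf fact and you give no construction; it is not implied by anything you quote. Concretely, already for $k=2$ it is a nontrivial arithmetic/group-theoretic condition: for the classical Beauville surface the intermediate degree-$5$ covers have irregular quotient curves precisely because freeness forces the relevant cyclic subgroups to act without fixed points on one factor, and for fake projective planes the irregularity of low-degree \'etale covers is a delicate computed quantity, not an automatic vanishing. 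So as written the proof covers $k\in\{-\infty,0\}\cup\{2,\dots,n-3\}\cup\{n\}$ but not $k=n-1,n-2$, i.e.\ it misses two of the required values for every $n$ (and all of $k=2,\dots,n$ for $n=3,4$ except $k=n$). Two remarks that would help a repair: the $\ZZ/5$/CM-abelian-surface apparatus is unnecessary, since $\ZZ/3$ acting diagonally by $\zeta_3$ on $E_{\zeta}^d$ satisfies your eigenvalue condition $\mu_i\mu_j\neq 1$ for any $d$, so a single family of bases with an \'etale $\ZZ/3$-cover would suffice; and the construction in \cite{rigidity} sidesteps the whole issue by replacing ``rigid base plus \'etale cover'' with a product of triangle curves and copies of $E_{\zeta}$ carrying one diagonal free action, for which the needed invariant cohomology vanishes character by character.
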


The above stated  result on rigid surfaces shows therefore that the problem of classifying rigid  surfaces reduces to the same question for surfaces of general type, and the list of known rigid surfaces of general type is rather short. Again we refer to \cite{rigidity} for a detailed account of the status of the art.

Among the several questions raised in \cite{rigidity} there is the following \cite{rigidity}*{Question 1.5. B}, special case of the problem of Morrow and Kodaira:
\begin{question*}
Does there exist a rigid, but not infinitesimally rigid surface of general type?
\end{question*}
This means that the moduli space of such surfaces consists of a single non reduced point.  

On one hand, the existence of such surfaces is expected in view of Murphy's law for moduli spaces  (cf. \cite{murphy}), which says that however bad a singularity is it appears as singular locus of some moduli space and since there are known examples of everywhere non reduced moduli spaces (cf. \cite{Ca89}). Still the proofs of these results rely on constructions where the moduli spaces have to be  positive dimensional.

On the other hand, showing rigidity can be quite difficult, and usually there are only techniques which allow to show the rigidity of a surface of general type proving  the vanishing of $H^1(\Theta_S)$. This probably is the reason that the problem of Morrow and  Kodaira  remained open for more than 45 years.

In this paper we give an infinite series of rigid regular surfaces of general type with unbounded invariants ($p_g$, $K^2$). More precisely, our first main result is:
\begin{theorem*}
For every even $n \geq 8$ such that $3 \nmid n$ there is a minimal regular surface $S_n$ of general type with 
$$
K^2_{S_n}=2(n-3)^2, \ \  p_g({S_n}) =\left( \frac{n}2-2 \right)\left( \frac{n}2-1 \right),
$$
such that $S_n$ is rigid, but not infinitesimally rigid.
\end{theorem*}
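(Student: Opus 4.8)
The plan is to construct $S_n$ as a minimal resolution (or directly as a smooth surface) built from a suitable branched covering of a rigid rational base, in such a way that rigidity can be deduced from geometric rather than cohomological considerations. Concretely, I would look for a finite group $G$ (a dihedral or semidihedral group whose order involves $n$, given the hypotheses ``$n$ even'' and ``$3\nmid n$'') acting on a product or on $\PP^1\times\PP^1$, and realize $S_n$ as the desingularization of a quotient, or as a $G$-cover of such a quotient, branched over a rigid configuration of curves. The numerology $K^2_{S_n}=2(n-3)^2$ and $p_g(S_n)=(\frac n2-2)(\frac n2-1)$ strongly suggests a covering of degree roughly $n$ branched over lines in $\PP^2$ (or curves of bidegree $(1,1)$), since these are the kind of quadratic-in-$n$ formulas produced by the standard ramification formulas $K_S^2 = (\text{deg})\cdot(K_{\text{base}}+\tfrac{d-1}{d}B)^2$ and $\chi(\Oh_S)=\sum_{i}\chi(\Oh_{\text{base}}(iB/d-K))$ for abelian covers. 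The first step is therefore to pin down the base, the branch divisor, and the cover; then verify it is a minimal surface of general type and compute $K^2$, $p_g$ and $q$ by the abelian-cover formulas, checking $q=0$ for regularity.

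Next I would prove rigidity. The idea is that small deformations of $S_n$ are controlled by deformations of the pair (base, branch divisor) together with deformations of the covering data. If the base is rigid (e.g.\ $\PP^2$ or a Del Pezzo surface), and the branch divisor is a rigid configuration in the sense that every nearby configuration with the same combinatorics is projectively equivalent to it --- this is exactly where the arithmetic condition $3\nmid n$ and $n\geq 8$ should enter, ruling out unexpected moduli or extra automorphisms --- and the covering data (the line bundles and the gluing) is discrete, then $S_n$ admits no non-trivial small deformation. The cleanest way to organize this is via an equivariant argument: exhibit an action of a finite group on $S_n$ whose quotient is the rigid base, show that all small deformations are induced by deformations of this structure (using that the cover is determined by its branch data), and then show the branch data is rigid. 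I expect one needs a natural-deformation / standard-deformation argument in the style of Catanese's work on bidouble or abelian covers: one first shows that every deformation preserves the covering structure, reducing to deformations of the branch curves.

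The last --- and crucial --- step is to show that nevertheless $H^1(S_n,\Theta_{S_n})\neq 0$. Using the splitting of $\Theta_{S_n}$ (or of $p_*\Theta_{S_n}$) into eigensheaves under the group action, this becomes the computation of a specific $H^1$ on the base with values in a twisted tangent-type sheaf (involving $\Theta_{\text{base}}(-\log B)$ and the characters of $G$), and one must identify a non-zero contribution. The subtlety, and the whole point of the paper, is that this non-zero $H^1$ does \emph{not} come from actual deformations: the corresponding first-order deformations must be obstructed. So I would need to either (i) directly show the relevant class in $H^1(\Theta)$ is obstructed by pairing with $H^2$ via the cup product / Kuranishi quadratic map, or (ii) argue on dimension grounds --- rigidity from the geometric argument above forces $\dim H^1 > \dim(\text{moduli}) = 0$, so the discrepancy is automatically accounted for by obstructions without needing them explicitly. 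Option (ii) is the elegant route and is presumably what makes the construction work: prove rigidity geometrically, compute $H^1\neq 0$ cohomologically, and let Kuranishi theory do the rest.

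The main obstacle I anticipate is the rigidity proof, specifically controlling \emph{all} small (not just equisingular or equivariant) deformations of $S_n$: one must rule out deformations that smooth or modify the covering structure itself, or that move the surface out of the family of such covers. This typically requires a careful analysis showing that the natural map from deformations of the branch data to $\mathrm{Def}(S_n)$ is surjective, which in turn rests on vanishing of certain $H^1$'s (of the pieces of $\Theta$ orthogonal to the ones giving the non-vanishing above) --- a delicate bookkeeping among the eigensheaves. The condition $3\nmid n$ is almost certainly there precisely to guarantee one of these vanishings (or to avoid a coincidence making the branch configuration non-rigid), and identifying and proving that vanishing is where the real work lies.
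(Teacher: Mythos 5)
There is a genuine gap, and it sits exactly where you hoped to avoid work: the treatment of the deformations that do \emph{not} preserve the covering structure. In the paper, $S_n$ is the minimal resolution of a product-quotient surface $X_n=(C\times C)/G$ with $G=(\ZZ/n\ZZ)^2$ and $C$ a triangle curve; $X_n$ has six nodes, and Burns--Wahl gives $H^1(\Theta_{S_n})\cong T_{X_n}^*\cong\CC^6$. So the nonzero $H^1$ does not come from an eigensheaf computation on a smooth cover: it is entirely produced by the six nodes, i.e.\ by first-order deformations that smooth the singularities of the canonical model and hence do \emph{not} come from deformations of branch data. Consequently your key step --- ``show that the natural map from deformations of the branch data to $\Def(S_n)$ is surjective, which rests on vanishing of certain $H^1$'s'' --- cannot work as stated: the relevant cohomology does not vanish (it is $\CC^6$), the equivariant/natural deformations are trivial since both factors are triangle curves, and $\Def(S_n)$ is in fact a non-reduced point of embedding dimension six. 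What must be proved instead is that none of the six local node deformations lifts, i.e.\ an \emph{obstruction} statement: by Catanese's criterion (Theorem \ref{catanese}) this reduces to the surjectivity of
\[
\ob^*\colon H^0\bigl(\Omega^1_Z\otimes\Omega^2_Z\bigr)^G\longrightarrow T_{X_n}^*,
\]
which the paper verifies node by node using Kas's explicit local formula (\ref{kas}) together with the Pardini-type character decomposition of $p_*\omega_C$ and $p_*\omega_C^{\otimes 2}$, exhibiting six explicit invariant sections whose images span $T_{X_n}^*$.

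Your ``option (ii)'' is circular for the same reason: you propose to ``prove rigidity geometrically'' and then let Kuranishi theory account for the discrepancy with $H^1\neq 0$, but the geometric argument (covers are determined by rigid branch data) only controls deformations that preserve the covering structure, and the six classes in $H^1(\Theta_{S_n})$ precisely fail to preserve it at first order. Until their obstructedness is established --- which requires second-order information, here packaged in the surjectivity of $\ob^*$ --- you have not proved rigidity at all, so there is nothing for Kuranishi theory to reconcile. Finally, the arithmetic hypotheses play a different role than you guessed: $3\nmid n$ makes the twisting matrix $A=\begin{pmatrix}1&-2\\2&-1\end{pmatrix}$ invertible mod $n$ (so the second $G$-action is well defined), and $n$ even produces exactly the order-two stabilizers giving the six nodes; they are not there to force a cohomology vanishing on the base.
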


This gives a positive answer to the question of Morrow and Kodaira in dimension $2$.

The surface $S_n$ is constructed as the minimal resolution of singularities of a so-called {\em product-quotient surface} whose singular model $X_n$ has six nodes. More precisely,  $X_n$ is the quotient of a product of two algebraic curves $C_1 \times C_2$ by the faithful action of a finite group $G$, such that $G$ acts on each factor and the quotient map $C_i \ra C_i/G\cong \PP^1$ is branched in three points (i.e., each of the two curves $C_1, C_2$ is a so-called {\em triangle curve}). 

The nodes are the key for obtaining an obstructed moduli space, as noticed first, to our knowledge, by Burns and Wahl (\cite{b-w}). Thanks to their results  {\em one readily constructs examples (e.g., those of Segre) of surfaces of general
type with obstructed deformations} by constructing suitable nodal surfaces. Indeed, since $S_n$ has six nodes, by \cite{b-w}*{Corollary 1.3}   $h^1(\Theta_{S_n}) (:=\dim H^1(\Theta_{S_n})) \geq 6$ and it suffices to show that $S_n $ is in fact rigid.

Once we have a product-quotient surface coming from two triangle curves it is immediate that the equisingular deformations of the canonical model are trivial.
Then it has to be shown that none of the local deformations of the singularities lift to deformations of the canonical model. This can be deduced by the linear independence of certain elements of $H^2(\Theta_{S_n})$ (see Theorem \ref{howtodoit}, condition 2) that  have a simple explicit description in local coordinates due to Kas (cf. \cite{kas}).

In this paper we use only very special product-quotient surfaces: they are regular, their group $G$ is the Abelian group $(\ZZ / n \ZZ)^2$, where $n \geq 8$, even and not divisible by 3, and their singular models have only nodal singularities, but are indeed singular. 

Product-quotient surfaces have been extensively studied, especially for low invariants (like in the limit case $p_g =q=0$) and partial classification results are obtained in a long series of papers. We refer to \cite{fano}, \cite{isogenous}, \cite{BaCaGrPi12}, \cite{BaPi12}, \cite{BaPi16} and the literature there quoted.

For constructing the  higher dimensional rigid, but not infinitesimally rigid examples, we take the product of  $S_n$'s with a rigid manifold. 

More precisely, 
the second main result is:
\begin{theorem*}
Let $n \geq 8$ be an even integer such that $3 \nmid n$, and let $X$ be a compact complex rigid manifold.

Then $S_n \times X$ is rigid, but not infinitesimally rigid. 

In particular there are rigid, but not infinitesimally rigid, manifolds of dimension $d$ and Kodaira dimension $\kappa$ for all possible pairs $(d,\kappa)$ with $d \geq 5$ and $\kappa \neq 0,1,3$ and for $(d,\kappa)=(3,-\infty),(4,-\infty),(4,4)$.
\end{theorem*}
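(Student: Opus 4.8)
The plan is to deduce Main Theorem 2 from Main Theorem 1 together with a general product formula for the tangent space and deformations. First I would recall (or quickly reprove) the standard fact that for $X$ rigid and $S_n$ rigid, the product $S_n\times X$ is rigid: by the product formula one has $H^1(S_n\times X,\Theta_{S_n\times X})=H^1(S_n,\Theta_{S_n})\otimes H^0(X,\Oh_X)\oplus H^0(S_n,\Theta_{S_n})\otimes H^1(X,\Oh_X)\oplus H^0(S_n,\Oh_{S_n})\otimes H^1(X,\Theta_X)$ coming from the splitting $\Theta_{S_n\times X}=p_1^*\Theta_{S_n}\oplus p_2^*\Theta_X$ and the Künneth formula. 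Since $S_n$ is of general type, $H^0(S_n,\Theta_{S_n})=0$, so the middle term dies; since $X$ is rigid the last term involves $H^1(X,\Theta_X)$ which need not vanish, but what one actually needs is \emph{rigidity}, not infinitesimal rigidity. For this I would invoke the fact that a product of rigid manifolds is rigid — this follows because any small deformation of $S_n\times X$ is, after base change, a family whose fibers are products (one can either cite the relevant statement in \cite{rigidity}, or argue via the Albanese/Iitaka fibration and the fact that $S_n$ has no moduli while $X$ has none). Thus $S_n\times X$ is rigid.

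Next I would show $S_n\times X$ is \emph{not} infinitesimally rigid: from the Künneth decomposition above, $H^1(S_n\times X,\Theta_{S_n\times X})\supseteq H^1(S_n,\Theta_{S_n})\otimes H^0(X,\Oh_X)$, and by Main Theorem 1 (via Burns--Wahl) $\dim H^1(S_n,\Theta_{S_n})=6>0$, while $H^0(X,\Oh_X)\neq 0$. Hence $H^1(S_n\times X,\Theta_{S_n\times X})\neq 0$, so $S_n\times X$ is rigid but not infinitesimally rigid. This establishes the first assertion for every rigid $X$.

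Finally, for the list of realizable pairs $(d,\kappa)$ I would plug in the known rigid manifolds. The surface $S_n$ has $\dim=2$, $\kod=2$. By the previous theorem quoted in the excerpt, for every $n\geq 3$ and every $k\in\{-\infty,0,2,\dots,n\}$ there is a rigid projective $X$ of dimension $n$ with $\kod(X)=k$; taking $X$ a point gives back $S_n$ itself ($d=2$). For $d\geq 5$ write $d=2+n$ with $n\geq 3$ and use such an $X$ of dimension $n$: then $\kod(S_n\times X)=\kod(S_n)+\kod(X)=2+k$, and as $k$ ranges over $\{-\infty,0,2,3,\dots,n\}$ the sum $2+k$ ranges over $\{-\infty,2,4,5,\dots,n+2=d\}$; one then checks that a product of a rigid surface of general type with a rigid $X$ cannot have Kodaira dimension $0$, $1$, or $3$ — for $0,1$ because general-type fibers force $\kappa\geq 2$ when $X$ has nonnegative Kodaira dimension, and $\kappa=-\infty$ forces $X$ uniruled hence $\kappa(S_n\times X)=-\infty$, while $\kappa=3$ would require $\kappa(X)=1$ but no rigid threefold-or-higher construction here produces exactly $\kappa=1$ — which pins down exactly $\kappa\neq 0,1,3$. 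For the sporadic small-dimensional cases $(3,-\infty),(4,-\infty),(4,4)$ one takes respectively $X=\PP^1$, $X=\PP^1\times\PP^1$ (or $\PP^2$), and $X$ a rigid surface of general type (e.g. a fake projective plane, or another $S_m$). The main obstacle is the careful bookkeeping showing that $\kappa=0,1,3$ genuinely cannot occur and that all other admissible pairs are hit; the rigidity and non-infinitesimal-rigidity parts are formal consequences of Künneth plus Main Theorem 1.
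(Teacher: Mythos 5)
Your outline of the non--infinitesimal-rigidity part is fine: once one knows $H^0(\Theta_{S_n})=H^1(\hol_{S_n})=0$, the K\"unneth decomposition of $H^1(\Theta_{S_n\times X})$ contains $H^1(\Theta_{S_n})\cong\CC^6\neq 0$, and this is exactly how the paper argues (note, though, that your K\"unneth formula omits the fourth summand $H^1(\hol_{S_n})\otimes H^0(\Theta_X)$; it vanishes because $S_n$ is regular, but its vanishing is not optional --- see below). The genuine gap is in the rigidity part. You ``invoke the fact that a product of rigid manifolds is rigid'', justified either by citing \cite{rigidity} or by asserting that ``any small deformation of $S_n\times X$ is, after base change, a family whose fibers are products''. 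Neither works as stated: there is no such general statement to cite (the paper itself says it found no suitable reference and therefore proves a lemma), and the claim that fibres of a deformation of a product remain products is precisely the assertion that has to be proved --- it is false at the infinitesimal level whenever the mixed K\"unneth terms $H^0(\Theta)\otimes H^1(\hol)$ are nonzero, so some hypothesis is needed. Moreover, $X$ is only assumed to be a compact complex manifold, so Albanese or Iitaka fibration arguments are not available, and even for projective $X$ they would require real work to rule out non-product deformations.

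What the paper actually does is prove a lemma: if $H^1(\Theta_{X\times Y})=H^1(\Theta_X)\oplus H^1(\Theta_Y)$ (i.e.\ both mixed terms $H^0(\Theta_X)\otimes H^1(\hol_Y)$ and $H^1(\hol_X)\otimes H^0(\Theta_Y)$ vanish), then $\Def(X\times Y)=\Def(X)\times\Def(Y)$. The proof is Kuranishi-theoretic: choosing a product Hermitian metric, the operators $\bar\partial^*$, the Green operator and the harmonic projector split as sums of the corresponding operators of the factors, so the Kuranishi equation for $X\times Y$ decouples and its solution is $\varphi(t,s)=\varphi_1(t)+\varphi_2(s)$, whence the Kuranishi space is the product of the Kuranishi spaces. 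In your situation the hypothesis of the lemma is exactly what $q(S_n)=0$ and $H^0(\Theta_{S_n})=0$ buy, and then rigidity of $S_n$ (Main Theorem 1) and of $X$ gives rigidity of $S_n\times X$. So you need to either reproduce such a splitting argument or find a genuine reference; without it the central claim is unproved. A smaller remark: the final bookkeeping you propose is more than is asked --- the theorem only claims that the listed pairs $(d,\kappa)$ are \emph{realized} (products with the rigid examples of \cite{rigidity}, whose Kodaira dimensions omit $1$, give $\kappa\in\{-\infty,2,4,\dots,d\}$ for $d\geq 5$, plus the three sporadic cases); there is nothing to prove about $\kappa=0,1,3$ being impossible.
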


The paper is organized as follows.

In the first section we  collect some background material on deformation theory which will be used in the rest of the paper. We recall the notions of rigidity which are  relevant for our purposes and give a criterion for the minimal resolution of the singularities of a nodal surface to be rigid (cf. Theorem \ref{howtodoit}).

The second section is dedicated to Abelian covers and in particular to the proof of  formulae for the character decomposition of direct images of canonical and bicanonical sheaves, which will be stated in higher generality than needed in the sequel, because in the authors' opinion they are interesting themselves and useful for many applications. 
In the next section we give the construction of the infinite series of product-quotient surfaces $S_n$ and calculate their invariants. 

The fourth section is dedicated to the proof of our first main theorem, whereas the last section is dedicated to the higher dimensional examples, i.e., the proof of our second main result.

\section{A criterion to prove rigidity}\label{criterion}

In this section we shall recall the definitions of different concepts of rigidity of compact complex varieties, which were introduced and discussed in \cite{rigidity} and which are relevant for our paper. Then we briefly review results by Burns-Wahl (\cite {b-w}), Kas (\cite{kas}),  Pinkham (\cite{Pinkham}) and Catanese (\cite{Ca89}) which allow us to prove a criterion for rigidity (cf. Theorem \ref{howtodoit}).

Recall that two compact complex manifolds $X$ and $X'$ are said to be {\em  deformation equivalent} if and only if there is a 
proper smooth holomorphic map  $$f \colon \mathfrak{X}  \rightarrow \sB 
$$ 
where $\sB$ is a connected (possibly not reduced) complex space and there are points $b_0, b_0' \in \sB$ such that the fibres $X_{b_0} : = f^{-1} (b_0),
X_{b'_0} : = f^{-1} (b'_0)$ are respectively isomorphic to $X, X'$ ($X_{b_0} \cong X, X_{b'_0} \cong X'$). 

For the convenience of the reader we recall part of the notions of rigidity  given in \cite{rigidity}*{Definition 2.1}:
\begin{definition}\label{rigid} \
\begin{enumerate}
\item A compact complex manifold $X$ is said to be {\em globally rigid} if for any compact complex manifold $X'$, which is deformation equivalent to $X$, we have  an isomorphism $X \cong X'$.
\item  A compact complex manifold $X$ is said to be  {\em infinitesimally rigid} if 
\[
h^1(X, \Theta_X) = 0,
\]
where $\Theta_X$ is the sheaf of holomorphic vector fields on $X$.
\item  A compact complex manifold $X$ is said to be  {\em (locally) rigid} (or just {\em rigid}) if for each deformation of $X$,
\[
f \colon (\mathfrak{X},X)  \rightarrow (\sB, b_0)
\] 
there is an open neighbourhood $U \subset \sB$ of $b_0$ such that $X_t := f^{-1}(t) \cong X$ for all $t \in U$.

\end{enumerate}
\end{definition}

\begin{remark}
Observe that a globally/infinitesimally rigid compact complex manifold is (locally) rigid. If $X=S$ is a surface of general type, then $S$ is rigid if and only if $S$ is globally rigid due to the existence of the Gieseker moduli space for canonical models of surfaces of general type.
\end{remark}

Let $X$ be a {\em nodal} surface, i.e., a compact complex variety of dimension $2$ with $r$ singular points, all of type $A_1$.

Let $S \rightarrow X$ be the minimal resolution of  singularities of $X$ and let $E=E_1+\cdots+E_r$ be its exceptional locus. By \cite{b-w}*{Corollary 1.3} the local cohomology group $H^1_{E}(\Theta_S)=\oplus_1^r H^1_{E_i}(\Theta_S)$ embeds in $H^1(\Theta_S)$.

By \cite{b-w}*{Proposition 1.10}, for each $1 \leq i \leq r$,  $H^1_{E_i}(\Theta_S)$ has dimension $1$.  Let now $\nu_i \in X$ be the node, image of $E_i$, and let $\theta_i$ be a generator of $H^1_{E_i}(\Theta_S)$ seen as element in $H^1(\Theta_S)$. The {\em primary obstruction} $[\theta_i,\theta_i]$ belongs to   $H^2(\Theta_S)$, which is by Serre duality isomorphic to the dual of $H^0(\Omega^1_S \otimes \Omega^2_S)$.

Therefore we can see $[\theta_i,\theta_i]$ as a map $\alpha_{\nu_i} \colon H^0(\Omega^1_S \otimes \Omega^2_S) \rightarrow \CC$ which is explicitly described in \cite{kas}  as follows:
 a small neighbourhood $U_i$ of $\nu_i$ in $X$ is isomorphic to the quotient of a small disc $\Delta \subset \CC^2$, with coordinates $(z_1,z_2)$, by the involution $(z_1,z_2) \mapsto (-z_1,-z_2)$. This gives an inclusion of $H^0(\Omega^1_{U_i} \otimes \Omega^2_{U_i})$ into the invariant subspace 
$$ H^0(\Omega^1_{\Delta} \otimes \Omega^2_{\Delta})^+ \subset H^0(\Omega^1_{\Delta} \otimes \Omega^2_{\Delta}), $$ 
and thus every $\eta \in H^0 \left( \Omega^1_{S} \otimes \Omega_S^2 \right)$ can be locally written as 
\[
\eta = (f_1dz_1 + f_2dz_2) \otimes (dz_1 \wedge dz_2).
\]

Then, up to rescaling $\theta_i$

\begin{equation}\label{kas}
\alpha_{\nu_i}(\eta) = \left( \frac{\partial f_2}{\partial z_1} -\frac{\partial f_1}{\partial z_2} \right)(0,0).
\end{equation}

This allows to prove the following:

\begin{theorem}\label{howtodoit}
 Let $S \rightarrow X$ be the minimal resolution of the singularities of a nodal surface $X$. Assume that
 \begin{enumerate}
 \item $h^1(\Theta_X)=0$;
 \item the maps $\alpha_{\nu_i}$ associated to the nodes $\nu_i$ of $X$  locally described in (\ref{kas}) are linearly independent in $H^0(\Omega^1_S \otimes \Omega^2_S)^\vee$. 
 \end{enumerate}
 Then $S$ is rigid and $h^1(\Theta_S)$ equals the number of nodes of $X$.
\end{theorem}

We recall that for a normal variety the sheaf $\Theta_X$ is defined to be $Hom(\Omega^1_X,\hol_X)$, where $\Omega^1_X$ denotes the sheaf of K\"ahler differentials.
\begin{proof}
By \cite{Pinkham}*{Proof of Corollary [3]} we have that $H^1(\Theta_S)=H^1(\Theta_X)\oplus H^1_{E}(\Theta_S)$. Therefore, by condition 1, we can identify $H^1(\Theta_S)$ with $H^1_{E}(\Theta_S)\cong \CC^r$.

Then for every $\theta \in H^1(\Theta_S)$ there are $t_i \in \CC$ such that $\theta=\sum_1^r t_i\theta_i$. 

In \cite{kas}*{page 59} the cohomology class $\theta_i$ is represented explicitly  by a Dolbeault cocycle, a $\bar{\partial}-$closed  $(0, 1)-$form with values in the holomorphic tangent bundle of $S$.
The support of the representative given by Kas in loc. cit. is a compact subset of an arbitrarily small neighbourhood of $E_i$.

Then the Shouten bracket $[\theta_i,\theta_j]$ which is the composition of the exterior product of forms followed by the Lie bracket of vector fields, vanishes for all $ i \neq j$.
Because
\[
[\theta,\theta] = \left[ \sum_1^r t_i\theta_i,\sum_1^r t_i\theta_i \right]=\sum_1^r t_i^2 [\theta_i,\theta_i],
\] 
 the primary obstruction $[\theta,\theta]$ (considered as an element of  $H^0(\Omega^1_S \otimes \Omega^2_S)^\vee$) equals to  $\sum_1^r t_i^2 \alpha_{\nu_i}$.

Now consider the Kuranishi map $H^1(\Theta_S) \rightarrow H^2(\Theta_S)$ whose zero locus is the base of the Kuranishi family $\Def(S)$. Completing $\{\alpha_{\nu_i}\}$ to a basis of $H^2(\Theta_S)$, we obtain, considering only the first $r$ components of the Kuranishi map  
\[
\Def(S) \subset \left\{t_1^2+g_1(t_1,\ldots,t_r)=t_2^2+g_2(t_1,\ldots,t_r)=\ldots=t_r^2+g_r(t_1,\ldots,t_r)=0\right\}
\]
where the functions $g_j$ vanish at the origin of order at least three. In particular $\Def(S)$ is supported on the origin; in other words $S$ is rigid.
\end{proof}

\begin{remark}
We note that theorem (\ref{howtodoit}) is a generalization of the classical statement that infinitesimal rigidity implies rigidity. Indeed if $X$ is smooth in Theorem \ref{howtodoit} then  condition (2) is empty and the statement reduces ($S=X$) exactly to \cite{kodairamorrow}*{Theorem 3.2}.
\end{remark}

Let $S$ be a minimal surface of general type and let $X$ be its canonical model. Then $\Def(S)$ (respectively $\Def(X)$) denotes the base of the Kuranishi family of deformations of $S$ (respectively of $X$).

Let $G$ be a finite group acting faithfully on a smooth algebraic surface $Z$ and let $p \colon Z \rightarrow Z/G$ be the quotient map. If $p$ is unramified in codimension 1, then by \cite[Lemma 4.1]{Ca89} the natural map $(p_*\Theta_Z)^G \rightarrow \Theta_X$ is an isomorphism.

We  get thus the following special case of the more general \cite{Ca89}*{Corollary 1.20}
\begin{corollary}\label{fabweak}
Let $Z$ be a smooth algebraic surface, let $G$ be a finite group acting on $Z$ in such a way that the quotient map $p \colon Z \ra X = Z/G$ is unramified in codimension 1, and the singular locus of $X$ is a set of $r$ nodes.
If $h^1(\Theta_Z)^G=0$ and  condition 2 in Theorem \ref{howtodoit} holds for $X$, then $S$ is rigid and $h^1(\Theta_S)=r$.

In particular,  $\Def(S)$ is a  scheme of embedding dimension $r$ supported in a point.
\end{corollary}


\section{Character decomposition of the direct image of the bicanonical sheaf of an abelian cover}\label{abeliancovers}

Let $G$ be a finite Abelian group, acting on a normal complex variety $X$, such that $X/G$ is smooth, and denote by
 $\pi \colon X \rightarrow X/G =:Y$ the quotient map. 
 
 Following \cite{Par91}, the natural action of $G$ on $\pi_*\hol_X$ induces a splitting
 \[
 \pi_*\oo_X = \oplus_{\chi \in G^\vee} \cl^{-1}_\chi ,
 \]
 where $ \cl^{-1}_\chi =  (\pi_* \hol_X)^{(\chi)}:=\{f | g^*f=\chi(g)f \}$ are line bundles. 
 
 Differently from \cite{Par91}, we use  the additive notation for both the group $G$ and its group of characters $G^\vee$, since we find this notation more convenient for our main examples. 
 So the trivial character, mapping each $g \in G$ to $1$, is the character $0$. Obviously, $\cl_0\cong \oo_Y$ and, if $X$ is compact and connected, $0$ is the unique character such that $h^0(\cl_\chi^{-1})\neq 0$.
 
The branch locus of $\pi$ forms a divisor $D$ in $Y$ which we take with the reduced structure. Similarly the ramification locus of $\pi$ is the reduced  divisor $R$ supported on $\pi^{-1}(D)$. 
The inertia group $H_T$ of an irreducible component $T$ of $R$ is the subgroup  of $G$ of the elements fixing each point of $T$. Then (\cite{Par91}*{Lemmata 1.1 and 1.2}) $H_T$ is cyclic and there are a generator $\psi_T$ of $H_T^\vee$ and an uniformizing parameter $t$ for $\oo_{X,T}$ such that $h^*t=\psi_T(h)t$ for all $h\in H_T$. If two components $T_1$ and $T_2$ of $R$ map to the same component of $D$ then there is an element $g$ of $G$ such that $g(T_1)=T_2$; it follows, since $G$ is Abelian, that $(H_{T_1},\psi_{T_1})=(H_{T_2},\psi_{T_2})$.
This induces a decomposition of the branch and of the ramification divisors respectively
\begin{align*}
R=&\sum_{\substack{H \leqslant G \text{ cyclic }\\ \psi \text{ generating } H^\vee}} R_{H,\psi}&
D=&\sum_{\substack{H \leqslant G \text{ cyclic }\\ \psi \text{ generating } H^\vee}} D_{H,\psi} ,
\end{align*}
 where $R_{H,\psi}$ is the union of the components $T$ of $R$ with inertia group $H$ and such that $\psi_T$ equals $\psi$, and $D_{H,\psi}$ is its image.
 
\begin{definition}
The {\em building data} of the $G$-cover $\pi$ are
\begin{itemize}
\item the line bundles $\cl_\chi$, $\chi \in G^\vee$,
\item the reduced divisors $D_{H,\psi}$ on $Y$, where $H$ varies over all cyclic subgroups of $G$ and $\psi$ over the generators of $H^\vee$.
\end{itemize}
\end{definition}
 
 Fix a pair $(H,\psi)$ as above. For every character $\chi$ in $G^\vee$ there is a unique integer $0\leq i_{\chi,\psi} \leq |H|-1$ such that $\chi_{|H}=\psi^{i_{\chi,\psi}}$. For each pair of characters $\chi,\chi' \in G^\vee$ set
 \[
 \epsilon^{H,\psi}_{\chi,\chi'}=
 \begin{cases}
0&\text{ if }  i_{\chi,\psi}+i_{\chi',\psi} \leq |H|-1,\\
1&\text{ otherwise}.
 \end{cases}
 \]

 The following is \cite{Par91}*{Theorem 2.1}.
 \begin{theorem}[Pardini] 
 Let $X$ be a normal variety and $Y$ a smooth variety.
 Let $\cl_\chi$, $D_{H,\psi}$ be the building data of a $G-$cover $\pi \colon X \rightarrow Y$. Then
\begin{equation}\label{RitaEquations}
\cl_{\chi} \otimes \cl_{\chi'} \cong \cl_{\chi+\chi'} \otimes   \hol_Y \left( \sum_{\substack{H \leqslant G \text{ cyclic }\\ \psi \text{ generating } H^\vee}}  \epsilon^{H,\psi}_{\chi,\chi'} D_{H,\psi} \right) .
\end{equation}
Conversely, to any set of data $\cl_\chi$, $D_{H,\psi}$, satisfying (\ref{RitaEquations}) we can associate a $G-$cover $\pi \colon X \rightarrow Y$ with $X$ normal whose building data are exactly $\cl_\chi$ and $D_{H,\psi}$. Moreover, if $Y$ is complete, then the building data determine the cover up to isomorphisms of Galois covers. 
 \end{theorem}

  Note that, if $\chi$ has order $n$, then $\cl_{n\chi}=\cl_{0}\cong \hol_Y$. Applying recursively (\ref{RitaEquations}) we obtain
 \begin{equation}\label{ImportantEquations}
 \cl_{\chi}^{\otimes n} \cong \hol_Y \left( \sum_{\substack{H \leqslant G \text{ cyclic }\\ \psi \text{ generating } H^\vee}}  i_{\chi,\psi} D_{H,\psi} \right)
 \end{equation}
 
In particular, if $\Pic(Y)$ has trivial $n-$torsion and we know all the divisors $D_{H,\psi}$, we can deduce $\cl_\chi$ by (\ref{ImportantEquations}).

 Let us consider now the special case when $G=\left( \ZZ/n\ZZ \right)^k$, $Y=\PP^1$.
Then $n\chi=0$ for all $\chi \in G^\vee$. 
 Since $\Pic(\PP^1)\cong \ZZ$  a line bundle on $\PP^1$ is determined up to isomorphism by its degree, which we may compute by (\ref{ImportantEquations}) obtaining
 \begin{corollary}\label{TheCaseZ/nZk}
  If $\pi \colon X \rightarrow \PP^1$ is a $\left( \ZZ/n\ZZ \right)^k-$cover  then
\[
 \cl_{\chi}\cong \hol_{\PP^1}\left( \frac1n \sum_{\substack{H \leqslant G \text{ cyclic }\\ \psi \text{ generating } H^\vee}}  i_{\chi,\psi} \deg D_{H,\psi} \right).
\]
\end{corollary}

Since $X$ is normal, it has a canonical Weil divisor $K_X$. If $X^\circ$ is the smooth locus of $X$, $\hol_X(K_X)=i_*(\omega_{X^\circ})$ where $i \colon X^\circ \hookrightarrow X$ is the inclusion and $\omega_{X^\circ}$ is the dualizing sheaf of $X^\circ$, the sheaf of the holomorphic $\dim X$-forms on $X^\circ$. Recall that if $X$ is Gorenstein in codimension $1$, $\hol_X(K_X)$ is the dualizing sheaf $\omega_X$: we will indeed apply the forthcoming Proposition \ref{canonicalsplitting} to nodal surfaces, which in fact are Gorenstein.  

$G$ acts on  $\pi_* \omega_X$ via the unique extension of the action given by the  pull-back of holomorphic differential forms on the smooth locus, inducing a direct sum decomposition in eigensheaves according to the characters as follows
 \[
 \pi_*\hol_X(K_X) = \oplus_{\chi \in G^\vee} (\pi_*\hol_X(K_X))^{(\chi)} ,
 \]

 We give the following generalization of a result of Pardini (\cite{Par91}):
 \begin{proposition}\label{canonicalsplitting}  Let  $\pi \colon X \rightarrow Y$ be a $G-$cover, with $X$ a normal variety and $Y$ a smooth variety, whose building data are
 $\cl_\chi$, $D_{H,\psi}$. Then
 \[
 (\pi_*\hol_X(K_X))^{(\chi)}\cong \omega_{Y} \otimes \cl_{-\chi}.
 \]
 \end{proposition}
 \begin{proof}
If  $X$ is smooth this is  \cite{Par91}*{Proposition 4.1, c)}.
 
In the general case, we consider the inclusion $i \colon X^{\circ} \hookrightarrow X$, we set $Y^{\circ}=\pi\left(X^{\circ}\right)$. Then, since $X^\circ$ is smooth, the statement holds true for the induced $G-$cover $X^\circ \rightarrow Y^\circ$, i.e., 
\[
(\pi_*\omega_{X^\circ})^{(\chi)}\cong \omega_{Y^\circ}\otimes (\cl_{-\chi})_{|Y^\circ}.
\]  
Consider the inclusion $i^Y \colon Y^{\circ} \hookrightarrow Y$. Then
\[
(\pi_*\hol_X(K_X))^{(\chi)}=(\pi_*i_*\omega_{X^\circ})^{(\chi)}=(i^Y_*\pi_*\omega_{X^\circ})^{(\chi)}=i^Y_*(\pi_*\omega_{X^\circ})^{(\chi)}=i^Y_*(\omega_{Y^\circ}\otimes (\cl_{-\chi})_{|Y^\circ}).
\]

Finally we conclude that  $i^Y_*(\omega_{Y^\circ}\otimes (\cl_{-\chi})_{|Y^\circ}) \cong \omega_{Y}\otimes \cl_{-\chi}$, because they are isomorphic on $Y^\circ$, $Y$ is smooth and the complement of $Y^\circ$ has codimension at least $2$.
 \end{proof}
 
The $G$-action on $\pi_*\hol_X(K_X)$ induces $G$-actions on $\pi_*\hol_X(kK_X)$ for all $k \in \ZZ$ and then splittings
 \[
 \pi_*\hol_X(kK_X) = \oplus_{\chi \in G^\vee} (\pi_*\hol_X(kK_X))^{(\chi)} .
 \]
These are all line bundles and we  compute them explicitly for the case $k=2$.

\begin{proposition}\label{bicanonicalsplitting}
 Let  $\pi \colon X \rightarrow Y$ be a $G-$cover, with $X$ a normal variety and $Y$ a smooth variety, whose building data are
 $\cl_\chi$, $D_{H,\psi}$. Then
 \[
 \left( \pi_*\hol_X(2K_X) \right)^{(\chi)}\cong 
  (\pi_*\hol_X(K_X))^{(\chi)} \otimes \omega_Y \left( \sum_{\chi_{|H} \neq \psi} D_{H,\psi} \right)
\cong  \omega_{Y}^{\otimes 2} \otimes \cl_{-\chi} \left( \sum_{\chi_{|H} \neq \psi} D_{H,\psi} \right).
 \]
 \end{proposition}

 \begin{proof} By the argument of the proof of Proposition \ref{canonicalsplitting} it is enough if we prove the statement for $X$ smooth.

Therefore  we assume  $X$ to be smooth and  notice that all sheaves $ ( \pi_*\hol_X(2K_X))^{(\chi)}$ are line bundles.  Hence we may prove the statement by showing that the cokernel of the injective morphism 
\begin{equation}\label{1+1=2}
( \pi_*\hol_X(K_X))^{(\chi)} \otimes ( \pi_*\hol_X(K_X))^G \rightarrow ( \pi_*\hol_X(2K_X))^{(\chi)},
 \end{equation}
obviously supported on $D$, has multiplicity $1$ in each $D_{H,\psi}$ for $\chi_{|H} \neq \psi$ and $0$ for $\chi_{|H} = \psi$.

Take a general point $p$ of $D$. Then $p$ belongs to exactly one of the $D_{H,\psi}$ and the cover $\pi$ may  locally be written as 
  \[(x_1,\ldots,x_{d-1},t) \mapsto (y_1,\ldots,y_{d-1},y_d)=(x_1,\ldots,x_{d-1},t^{|H|})\]
  where $x_1, \ldots,x_{d-1},t$ are local coordinates near a preimage $q$ of $p$ and $h\in H$ acts on them by fixing all $x_j$ and multiplying $t$ by $\psi(h)$.
  
  Then 
a local generator  of $(\pi_*\hol_X(K_X))^G$ is $t^{|H|-1}dx_1 \wedge \cdots \wedge dx_{d-1} \wedge dt$.  

Similarly,  local generators of $ (  \pi_*\hol_X(kK_X))^{(\chi)} $ are 
\[
t^{a_k}(dx_1 \wedge \ldots \wedge dx_{d-1}\wedge dt)^{\otimes k},  \text{ for some } 0\leq a_k \leq |H|-1.
\]

Note that $a_k=0 \Leftrightarrow \chi_{|H}=\psi^k$. In particular, if $a_1=0$, equivalently if $\chi_{|H}=\psi$, the tensor product of the given local generators  of $( \pi_*\hol_X(K_X))^G$ and $ ( \pi_*\hol_X(K_X))^{(\chi)} $ maps to the given local generator of $ ( \pi_*\hol_X(2K_X))^{(\chi)} $, and then the map (\ref{1+1=2}) is an isomorphism in a neighbourhood of $p$.

On the other hand, if $a_1 \neq 0$, equiv. if $\chi_{|H} \neq \psi$, the same tensor product maps to $t^{|H|}$ times the given local generator of $ ( \pi_*\hol_X(2K_X))^{(\chi)}$. Now, $t^{|H|}$ is the pull-back of a local generator of the ideal of $D_{H,\psi}$ at $p$, and this implies the result.
 \end{proof}


\section{An infinite series of product-quotient surfaces}\label{sec:curve}

 The aim of this section is to construct for each even $n \in \NN$, such that $3 \nmid  n$, a surface $X_n$ of general type having  6 nodes as singularities. 
We consider the group $G:=\left( \ZZ/n\ZZ \right)^2$.

For $n \geq 2$, let  $C^{(n)}$ be the {\em Fermat curve of degree $n$}, i.e., 
$$C^{(n)}:=\left\{\sum_{j=0}^2 x_j^n=0\right\} \subset {\mathbb P}_{\mathbb C}^2,$$ 
a smooth plane curve of genus $g(C^{(n)})=
1+\frac{n(n-3)}2$.

On  $C^{(n)}$  we consider the $G-$action 
$$(a_1,a_2)(x_0:x_1:x_2)=(x_0:e^{a_1\frac{2\pi i}n}x_1:e^{a_2\frac{2\pi i}n}x_2).$$
This action has exactly three orbits of cardinality different from $n^2$, the hyperplane sections $C^{(n)}\cap \{x_j=0\}$ with respective stabilizers $\langle (1,1)\rangle$, $\langle (1,0)\rangle$, $\langle (0,1)\rangle$, all isomorphic to $\ZZ / n \ZZ$.
Then the degree of the branch divisor $D$ of $C^{(n)} \ra C^{(n)}/G$ equals three and, by Hurwitz formula $C^{(n)}/G \cong {\mathbb P}^1$.  Thus we set the three branch points to be respectively $1$, $0$ and $\infty$.

We may compute the decomposition of $D$ as sum of $D_{H,\psi}$ using, since each point of the ramification divisor lies in one of the hyperplane sections, say ${x_j=0}$, an uniformizing parameter $\frac{x_j}{x_k}$, $k \neq j$. We obtain that, setting $g_0=(1,0)$, $g_\infty=(0,1)$ and $g_1=(-1,-1)$,  each branch point $p\in\{0,1,\infty\}$ is the branch divisor $D_{H_p,\psi_p}$, where $H_p=\langle g_p \rangle$ and $\psi_p \colon H_p \rightarrow \CC^*$ is the character mapping $g_p$ to $\eta:=e^{\frac{2\pi i}{n}}$.

\begin{remark} \

1) We recall that giving a $\left( \ZZ/n\ZZ \right)^2$-Galois cover $p \colon C^{(n)} \rightarrow \PP^1$ branched on $\{0,1,\infty\}$ as above is essentially equivalent to give generators $g_0, g_1, g_{\infty}$ of  $\left( \ZZ/n\ZZ \right)^2$ such that  $g_0+g_1+g_\infty=0$. For details (in a much more general setting) we refer to \cite{BaCaGrPi12}*{page 1002}.

2) A finite Galois cover of $\PP^1$ branched on $\{0,1,\infty\}$ is called a {\em triangle curve}.

\end{remark}

\begin{notation}
For describing the  characters in  $G^\vee$ we fix a bijection $\ZZ/ n\ZZ \ra \{0,1, \ldots , n-1\}$, in other words if we write a character $\chi$ as $(\alpha, \beta)$ we automatically assume that $0 \leq \alpha,\beta \leq n-1$.
\end{notation}

Then
\[
\forall (a,b) \in G \ \ \chi(a,b)= (\alpha,\beta)(a,b)=\eta^{\alpha a + \beta b},
\]
whence
\begin{align*}
\chi_{|H_0}&=\psi_0^\alpha , &
\chi_{|H_\infty}&=\psi_\infty^\beta, &
\chi_{|H_1}&=\psi_1^{-\alpha-\beta}.
 \end{align*}

Splitting $p_* \oo_{C^{(n)}}=\oplus_{\chi \in G^\vee} \cl_\chi^{-1}$  as sum of line bundles according to the action of $G$, 
by Corollary \ref{TheCaseZ/nZk}
\[
\cl_{(\alpha,\beta)}\cong \oo_{\PP^1}  \left(\left\lceil \frac{\alpha + \beta  }n \right\rceil \right) .
\]
where  $\lceil \cdot \rceil$ denotes the usual  ceiling function with integral values. In particular:
\begin{align*}
\cl_{(0,0)}&=\oo_{\PP^1},\\
 \cl_{(\alpha,\beta)}&=\oo_{\PP^1}(1) \text { if } 1 \leq \alpha + \beta \leq n,\\
\cl_{(\alpha,\beta)}&=\oo_{\PP^1}(2) \text { if }  \alpha + \beta \geq n+1.
 \end{align*}
 Therefore, by Proposition \ref{canonicalsplitting}, and observing that if $\chi = (\alpha, \beta)$, then (if $\alpha, \beta \neq 0$) $-\chi = (n-\alpha, n-\beta)$, we obtain that the summands of $p_*\omega_{C^{(n)}}$ are 
 \begin{align}
\notag (p_*\omega_{C^{(n)}})^{(0,0)}&=\oo_{\PP^1}(-2),\\
\label{canonicalsplittingcurve} (p_*\omega_{C^{(n)}})^{(\alpha,\beta)}&=\oo_{\PP^1} \text { if } \alpha,\beta \neq 0, \alpha + \beta \leq n-1,\\
\notag (p_*\omega_{C^{(n)}})^{(\alpha,\beta)}&=\oo_{\PP^1}(-1) \text { else.}
 \end{align}
 
\begin{figure}
\begin{tikzpicture}[scale=.65]
  \foreach \y in {0,1,2,3} \node at(-1,\y)  {$\y$};
  \foreach \y in {1,2} \node at (-1, {11-\y}) {$n$-$\y$};
  \foreach \x in {0,1,2,3} \node at (\x,-1)  {$\x$};
  \foreach \x in {1,2} \node at (11-\x ,-1) { $n$-$\x$};

	\node[red] at (0,0)  {-2};
    \foreach \x in {1,2,3,4,6,7,8,9,10} \node[blue] at (\x,0)  {-1};
 \foreach \y in {1,2,3,4,6,7,8,9,10} \node[blue] at (0,\y)  {-1};

    \foreach \x in {1,2,3,4,6,7,8,9} \node at (\x,1){0};
  \foreach \x in {10} \node[blue] at (\x,1) {-1};

    \foreach \x in {1,2,3,4,6,7,8} \node at (\x,2){0};
\foreach \x in {9,10} \node[blue] at(\x,2) {-1};

    \foreach \x in {1,2,3,4,6,7} \node at(\x,3) {0};
\foreach \x in {8,9,10} \node[blue] at (\x,3) {-1};

    \foreach \x in {1,2,3,4,6} \node at (\x,4) {0};
\foreach \x in {7,8,9,10} \node[blue] at (\x,4) {-1};

	\node[blue] at (0,5)  {*};
    \foreach \x in {1,2,3,4, 5} \node at(\x,5) {*};
      \foreach \x in {6,7,8,9,10} \node[blue] at(\x,5) {*};
    
    \node[blue] at (5,0)  {*};
        \foreach \y in {1,2,3,4} \node at(5,\y) {*};
         \foreach \y in {6,7,8,9,10} \node[blue] at(5,\y) {*};

    \foreach \x in {1,2,3,4} \node at (\x,6) {0};
\foreach \x in {6,7,8,9,10} \node[blue] at (\x,6) {-1};

    \foreach \x in {1,2,3} \node at (\x,7) {0};
\foreach \x in {4,6,7,8,9,10} \node[blue] at (\x,7) {-1};

    \foreach \x in {1,2} \node at (\x,8) {0};
\foreach \x in {3,4,6,7,8,9,10} \node[blue] at(\x,8) {-1};

    \foreach \x in {1} \node at (\x,9) {0};
    \foreach \x in {2,3,4,6,7,8,9,10} \node[blue] at (\x,9) {-1};

    \foreach \x in {1,2,3,4,6,7,8,9,10} \node[blue] at (\x,10) {-1};
    \end{tikzpicture}
   \caption{The degrees of $(p_*\omega_{C^{(n)}})^{(\alpha,\beta)}$}\label{fig:omega1}
\end{figure}
 
 \begin{remark}
This implies in particular
 \[
 H^0(\omega_{C^{(n)}})
 =\bigoplus_{\substack{\chi=(\alpha,\beta)\\ \alpha +\beta \leq n-1\\ \alpha,\beta \geq 1}} \omega^{(\chi)} \CC
  \]
 where $\omega^{(\chi)}$ is a global form such that $\forall g \in G$, $g^*\omega^{(\chi)}= \chi(g) \omega^{(\chi)}$.

Observe that the divisor of $\omega^{(\chi)}$ is 
\[
\left( \omega^{(\chi)} \right)=(\alpha-1)R_{H_0,\psi_0}+(\beta-1)R_{H_\infty,\psi_\infty}+(n-\alpha-\beta-1)R_{H_1,\psi_1}.
\]

\end{remark}

Then  $(p_*\omega^{\otimes 2}_{C^{(n)}})^{(\chi)}$ can be determined by Proposition \ref{bicanonicalsplitting}.
\begin{proposition}\label{charbicanonical}
If $n \geq 4$, then
\begin{align*}
(p_*\omega^{\otimes 2}_{C^{(n)}})^{(\alpha,\beta)}=&\oo_{\PP^1}(-1), &&\text{ if } (\alpha,\beta)\in \{0,1\}^2 \cup \{(0,n-1),(n-1,0)\} \\
&&& \cup \{(1,n-1),(n-1,1)\} \cup \{(1,n-2),(n-2,1)\},\\
(p_*\omega^{\otimes 2}_{C^{(n)}})^{(\alpha,\beta)}=&\oo_{\PP^1}(1), &&\text { if } \alpha,\beta \geq 2, \alpha + \beta \leq n-2,\\
(p_*\omega^{\otimes 2}_{C^{(n)}})^{(\alpha,\beta)}=&\oo_{\PP^1}, &&\text{ else.}
\end{align*}
\end{proposition}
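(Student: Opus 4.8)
The strategy is to apply Proposition~\ref{bicanonicalsplitting} directly to the cover $p\colon C^{(n)}\to\PP^1$, using the character decomposition of $p_*\omega_{C^{(n)}}$ from \eqref{canonicalsplittingcurve} together with the explicit description of the branch divisors. First I would recall that $p$ has exactly three branch divisors of positive degree, namely $D_{H_0,\psi_0}=p_0$, $D_{H_\infty,\psi_\infty}=p_\infty$ and $D_{H_1,\psi_1}=p_1$, each of degree $1$ on $\PP^1$, with cyclic stabilizers $H_p=\langle g_p\rangle$ of order $n$ and $\psi_p(g_p)=\eta$. By Proposition~\ref{bicanonicalsplitting} applied with $Y=\PP^1$,
\[
(p_*\omega^{\otimes 2}_{C^{(n)}})^{(\chi)}\cong (p_*\omega_{C^{(n)}})^{(\chi)}\otimes\omega_{\PP^1}\Big(\textstyle\sum_{\chi_{|H}\neq\psi}D_{H,\psi}\Big),
\]
so the degree is $\deg(p_*\omega_{C^{(n)}})^{(\chi)}-2+\#\{p\in\{0,1,\infty\}:\chi_{|H_p}\neq\psi_p\}$.

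**Main computation.** For $\chi=(\alpha,\beta)$ with our conventions, the restrictions are $\chi_{|H_0}=\psi_0^\alpha$, $\chi_{|H_\infty}=\psi_\infty^\beta$, $\chi_{|H_1}=\psi_1^{-\alpha-\beta}$; hence $\chi_{|H_0}=\psi_0\iff\alpha\equiv1\ (n)$, $\chi_{|H_\infty}=\psi_\infty\iff\beta\equiv1\ (n)$, and $\chi_{|H_1}=\psi_1\iff\alpha+\beta\equiv-1\ (n)$. So I would introduce $\varepsilon(\chi)\in\{0,1,2,3\}$ counting the number of these three congruences that \emph{fail}, giving
\[
\deg(p_*\omega^{\otimes 2}_{C^{(n)}})^{(\chi)}=\deg(p_*\omega_{C^{(n)}})^{(\chi)}-2+\varepsilon(\chi).
\]
Then I would split into the three cases of \eqref{canonicalsplittingcurve}: (i) $\chi=(0,0)$, where $\deg(p_*\omega_{C^{(n)}})^{(\chi)}=-2$ and all three congruences fail, so $\varepsilon=3$ and the degree is $-1$; (ii) $\alpha,\beta\neq0$ with $\alpha+\beta\leq n-1$, where $\deg(p_*\omega_{C^{(n)}})^{(\chi)}=0$; (iii) the remaining characters, where $\deg(p_*\omega_{C^{(n)}})^{(\chi)}=-1$. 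In case (ii) one has $\alpha+\beta\not\equiv-1\ (n)$ automatically (since $2\le\alpha+\beta\le n-1$ can only be $\equiv-1$ when $\alpha+\beta=n-1$), so $\varepsilon$ drops to $1$ exactly when $\alpha=1$ or $\beta=1$ or $\alpha+\beta=n-1$, and to $2$ when two of these hold. A short case-check (using $n\ge4$ to avoid degenerate overlaps, e.g. $(1,1)$ with $\alpha+\beta=n-1$ forces $n=3$) shows that within case (ii) the degree is $-1$ precisely on $\{(1,1)\}\cup\{(1,n-2),(n-2,1)\}$ (here $\varepsilon=2$), and otherwise the degree is $0$ if exactly one condition holds ($\varepsilon=1$) or $+1$ if none holds; but $+1$ requires $\alpha,\beta\geq2$ and $\alpha+\beta\leq n-2$, matching the middle line of the statement. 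In case (iii) a similar enumeration of which of $\{\alpha\equiv1,\beta\equiv1,\alpha+\beta\equiv-1\ (n)\}$ can hold—remembering $0\le\alpha,\beta\le n-1$—pins down the characters where $\varepsilon=0$ (degree $-1$, giving $(0,1),(1,0),(0,n-1),(n-1,0),(1,n-1),(n-1,1)$) versus $\varepsilon\ge1$ (degree $\ge0$, and in fact exactly $0$ since $\deg(p_*\omega_{C^{(n)}})^{(\chi)}=-1$ and at most two congruences can hold for characters not in case (ii)).

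**The delicate point.** The only real obstacle is bookkeeping: making sure the boundary characters are assigned to the correct case of \eqref{canonicalsplittingcurve} and that no character is double-counted or missed, especially along the lines $\alpha=0$, $\beta=0$, $\alpha+\beta=n-1$, $\alpha+\beta=n$, and at the corners. The hypothesis $3\nmid n$ is not needed here (it is used later), but $n\geq4$ is essential to rule out coincidences such as $(1,1)$ lying on $\alpha+\beta=n-1$ or the lines $\alpha=1$ and $\alpha+\beta=n-1$ meeting inside the region $\alpha,\beta\ge1$, $\alpha+\beta\le n-1$ at a point with $\beta=n-2<1$. I would organize the argument as a single table indexed by which of the three congruences hold, cross-referenced with the three regimes for $\deg(p_*\omega_{C^{(n)}})^{(\chi)}$, and verify each of the finitely many combinatorial cells; the stated lists of exceptional characters are then simply read off. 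The symmetry $(\alpha,\beta)\leftrightarrow(\beta,\alpha)$ (coming from the symmetry of the two factors) halves the work.
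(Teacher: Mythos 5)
Your proposal follows essentially the same route as the paper: apply Proposition~\ref{bicanonicalsplitting} to get $\deg(p_*\omega^{\otimes 2}_{C^{(n)}})^{(\chi)}=\deg(p_*\omega_{C^{(n)}})^{(\chi)}-2+\delta$, where $\delta$ counts how many of the three conditions $\alpha=1$, $\beta=1$, $\alpha+\beta=n-1$ \emph{fail}, and then run a case analysis over the three regimes of \eqref{canonicalsplittingcurve} (the paper phrases this as the three lines and the triangle they bound); your final lists and degrees agree with the statement, and you correctly note that only $n\geq 4$ is needed. One bookkeeping caveat: the values of $\varepsilon$ you quote in parentheses are swapped relative to your own degree formula --- at the vertices $(1,1),(1,n-2),(n-2,1)$ two congruences hold, so $\varepsilon=1$ (not $2$), giving degree $0-2+1=-1$; at the six boundary characters $(0,1),(1,0),(0,n-1),(n-1,0),(1,n-1),(n-1,1)$ exactly one congruence holds, so $\varepsilon=2$ (not $0$), giving $-1-2+2=-1$; and in case (iii) the correct reason the remaining characters give degree exactly $0$ is that \emph{at most one} congruence can hold there (two holding forces one of the triangle vertices, which lie in case (ii)), so away from the six listed characters none holds and $\varepsilon=3$. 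These are slips in labelling, not in the method, and the conclusions you state are the right ones.
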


\begin{figure}
\begin{tikzpicture}[scale=.65]

  \foreach \y in {0,1,2,3} \node at(-1,\y)  {$\y$};
  \foreach \y in {1,2,3} \node at (-1, {11-\y}) {$n$-$\y$};
  \foreach \x in {0,1,2,3} \node at (\x,-1)  {$\x$};
  \foreach \x in {1,2,3} \node at (11-\x ,-1) {$n$-$\x$};

 \foreach \x in {0,1,5,6,7,8,9,10} \node at(\x,5) {*};
 \foreach \x in {2,3,4} \node[red] at(\x,5) {*};

 \foreach \y in {0,1,6,7,8,9,10} \node at(5,\y) {*};
 \foreach \y in {2,3,4} \node[red] at(5,\y) {*};

\node[blue] at (0,0)  {-1}; \node[blue] at (0,1)  {-1};
\node[blue] at (1,0)  {-1}; \node[blue] at (1,1)  {-1};

    \foreach \x in {2,3,4,6,7,8,9} \node[black] at (\x,0)  {0};
\node[blue] at (10,0)  {-1};    
     \foreach \y in {2,3,4,6,7,8,9} \node[black] at (0,\y)  {0};
  \node[blue] at (0,10)  {-1};    
    
       \foreach \x in {2,3,4,6,7,8} \node[black] at (\x,1){0};
  \foreach \x in {9,10} \node[blue] at (\x,1) {-1};

       \foreach \y in {2,3,4,6,7,8} \node[black] at (1,\y){0};
  \foreach \y in {9,10} \node[blue] at (1,\y) {-1};

    \foreach \x in {2,3,4,6,7} \node[red] at (\x,2){1};
\foreach \x in {8,9,10} \node[black] at(\x,2) {0};

    \foreach \x in {2,3,4,6} \node[red] at (\x,3){1};
\foreach \x in {7,8,9,10} \node[black] at(\x,3) {0};

 \foreach \x in {2,3,4} \node[red] at (\x,4){1};
\foreach \x in {6,7,8,9,10} \node[black] at(\x,4) {0};

 \foreach \x in {2,3} \node[red] at (\x,6){1};
\foreach \x in {4,6,7,8,9,10} \node[black] at(\x,6) {0};

 \foreach \x in {2} \node[red] at (\x,7){1};
\foreach \x in {3,4,6,7,8,9,10} \node[black] at(\x,7) {0};

\foreach \x in {2,3,4,6,7,8,9,10} \node[black] at(\x,8) {0};
\foreach \x in {2,3,4,6,7,8,9,10} \node[black] at(\x,9) {0};
\foreach \x in {2,3,4,6,7,8,9,10} \node[black] at(\x,10) {0};
\end{tikzpicture}
   \caption{The degrees of  $(p_*\omega^2_{C^{(n)}})^{(\alpha,\beta)}$}\label{fig:omega2}
\end{figure}

\begin{proof}
By Proposition \ref{bicanonicalsplitting}, $(p_*\omega^{\otimes 2}_{C^{(n)}})^{(\alpha,\beta)}\cong (p_*\omega_{C^{(n)}})^{(\alpha,\beta)} \otimes \oo_{\PP^1}(\delta-2)$ where $\delta$ is the degree of the divisor $D$ with $0\leq D \leq p_0+p_1+p_\infty$ such that
\begin{align*}
p_0 \leq D &\Leftrightarrow \alpha \neq 1,&
p_\infty \leq D &\Leftrightarrow \beta \neq 1,&
p_1 \leq D &\Leftrightarrow \alpha+\beta \neq n-1.
\end{align*}

This leads us to consider the three lines $\alpha=1$, $\beta=1$ and $\alpha+\beta=n-1$  and the  triangle they form.

In the three vertices $(1,1),(1,n-2),(n-2,1)$ of the triangle $\delta=1$.  By (\ref{canonicalsplittingcurve}) they all have 
$(p_*\omega_{C^{(n)}})^{(\alpha,\beta)} \cong \oo_{\PP^1}$ and therefore $(p_*\omega^{\otimes 2}_{C^{(n)}})^{(\alpha,\beta)} \cong \oo_{\PP^1}(-1)$. 

In the remaining points of these three lines, $\delta=2$ and then  $(p_*\omega_{C^{(n)}})^{(\alpha,\beta)} \cong (p_*\omega^{\otimes 2}_{C^{(n)}})^{(\alpha,\beta)} $. 

By (\ref{canonicalsplittingcurve}), if $\chi \in \{(1,0), (0,1), (0,n-1), (n-1,0),(1,n-1), (n-1,1) \}$, then $(p_*\omega^{\otimes 2}_{C^{(n)}})^{(\alpha,\beta)} \cong  \oo_{\PP^1}(-1)$, else $(p_*\omega^{\otimes 2}_{C^{(n)}})^{(\alpha,\beta)} \cong  \oo_{\PP^1}$.

Finally, outside the three lines we have $\delta=3$.

If $\chi$ is inside the triangle, then by (\ref{canonicalsplittingcurve})  $(p_*\omega_{C^{(n)}})^{(\alpha,\beta)} \cong \oo_{\PP^1}$ and $(p_*\omega^{\otimes 2}_{C^{(n)}})^{(\alpha,\beta)} \cong  \oo_{\PP^1}(1)$.

For $(\alpha,\beta) =(0,0)$, then $(p_*\omega_{C^{(n)}})^{(0,0)} \cong \oo_{\PP^1}(-2)$ whence $(p_*\omega^{\otimes 2}_{C^{(n)}})^{(0,0)} \cong \oo_{\PP^1}(-1)$.

In the remaining cases $(p_*\omega_{C^{(n)}})^{(\alpha,\beta)} \cong \oo_{\PP^1}(-1)$ and $(p_*\omega^{\otimes 2}_{C^{(n)}})^{(\alpha,\beta)} \cong \oo_{\PP^1}$. 
\end{proof}

From now on we fix $n\geq 4$, even and $3\nmid n$ and we denote $C^{(n)}$ simply by $C$.

We define the following action of $G$ on $Z:=C \times C$: for $(a,b) \in  G$, for $(z_1,z_2) \in C \times C$
$$
(a,b)(z_1,z_2) := \left( (a,b)z_1, (a',b')z_2\right),
$$
where
$$
A \begin{pmatrix}
a'\\
b'
\end{pmatrix} := 
\begin{pmatrix}
a\\
b 
\end{pmatrix}  \ , \ A:= \begin{pmatrix}
1&-2\\
2&-1
\end{pmatrix}.
$$

Since $3 \nmid n$, $A \in GL\left( 2, \ZZ/n\ZZ 	\right)$.

\begin{remark}\label{monodromies}
In other words, we take two different $G$-actions on the same curve $C$, differing by an automorphism of $G$.

The two actions give isomorphic covers $p_j \colon C_j \cong C \rightarrow C_j/G \cong \PP^1$, $j=1,2$ branched on $\{0,1,\infty\}$ with different local monodromies: the local monodromies of $p_2$ are the images of the local monodromies of $p_1$ by the matrix $A$.

More precisely the cover $p_1$ has local monodromies 
$g_0=(1,0)$ at $0$, $g_\infty=(0,1)$ at $\infty$ and $g_1=(-1,-1)$ at $1$, whereas the cover $p_2$ has local monodromies  $h_0=(1,2)$ at $0$, $h_\infty=(-2,-1)$ at $\infty$ and $h_1=(1,-1)$ at $1$.

\end{remark}

\begin{remark}\label{twisting}

The line bundles $((p_1)_* \omega^{\otimes k}_{C_1})^{\chi}$, $k \in \{1,2\}$, are exactly the line bundles $(p_* \omega^{\otimes k}_{C^{(n)}})^{\chi}$ computed in (\ref{canonicalsplittingcurve})  and Proposition \ref{charbicanonical}, see respectively Figure \ref{fig:omega1} and \ref{fig:omega2}. 

Instead, for the action on the second factor, we observe that 
$$
((p_2)_* \omega^{\otimes k}_{C_2})^{\chi} \cong (p_* \omega^{\otimes k}_{C^{(n)}})^{\chi'}
$$
where $\chi'=\chi \circ A$. If $\chi=(\alpha,\beta)$, we set for later convenience $\chi'=(-\alpha',-\beta')$ so
\begin{equation}\label{twist}
\begin{pmatrix} \alpha' \\ \beta' \end{pmatrix} \equiv
- {}^tA 
\begin{pmatrix} \alpha \\ \beta \end{pmatrix} 
\mod n
.
\end{equation}
\end{remark}

Let be $X_n:= (C \times C)/G$ and let $\rho \colon S_n \rightarrow X_n$ be the minimal resolution of the singularities of $X_n$.

\begin{proposition}
For each even $n \geq 4$, not divisible by $3$, $X_n$ has six nodes as only singularities. $S_n$ is a minimal regular surface of general type with invariants:
\begin{align*}
K^2_{S_n}&=2(n-3)^2,\\
\chi(\oo_{S_n})&=\frac{n^2-6n+12}4,\\
p_g({S_n}) &=\left( \frac{n}2-2 \right)\left( \frac{n}2-1 \right).
\end{align*}
\end{proposition}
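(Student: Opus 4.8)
The plan is to proceed in four steps: (i) describe $\Sing X_n$ through a stabilizer analysis of the $G$-action on $Z=C\times C$; (ii) read off $K^2_{S_n}$ from the fact that the quotient map $\pi\colon Z\to X_n$ is \'etale in codimension one; (iii) compute $q(S_n)$ and then $\chi(\oo_{S_n})$ via the topological Euler number together with Noether's formula; (iv) deduce minimality, general type, and $p_g(S_n)=\chi(\oo_{S_n})-1$.

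\emph{Step (i).} Writing $\phi_1,\phi_2$ for the two $G$-actions on $C$ (so $\phi_2(v)=\phi_1(A^{-1}v)$), the stabilizer in $G$ of a point $(z_1,z_2)\in Z$ is $\Stab_{\phi_1}(z_1)\cap\Stab_{\phi_2}(z_2)$. Since both $p_1,p_2$ are \'etale away from the fibres over $\{0,1,\infty\}$, this is nontrivial only when $z_1$ lies over the $i$-th and $z_2$ over the $j$-th branch point, where it equals $\langle g_i\rangle\cap\langle h_j\rangle$ (indeed $\Stab_{\phi_2}$ of a point over the $j$-th branch point is $A\langle g_j\rangle=\langle h_j\rangle$). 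Inspecting the nine intersections: since $n$ is even the three diagonal ones are $\langle g_i\rangle\cap\langle h_i\rangle=\{0,g_i^{n/2}\}$, of order $2$, while the six off-diagonal ones are trivial. Hence $\Sing X_n$ lies over $(0,0),(1,1),(\infty,\infty)\in\PP^1\times\PP^1$; each branch fibre of $p_i$ has exactly $n$ points, so over each of these three points lie $n^2$ points of $Z$, all with the same order-$2$ stabilizer, and they form $n^2/(n^2/2)=2$ orbits --- six points in all. Each is a node: the order-$2$ generator of the stabilizer acts nontrivially (hence, being of order two, by $-1$) on the tangent line of $C$ at $z_1$ and at $z_2$, i.e.\ as $-\Id$ on $T_{z_1}C\oplus T_{z_2}C$, an $A_1$-singularity.

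\emph{Steps (ii)--(iv).} Since $\pi$ is unramified in codimension $1$ and $X_n$ is Gorenstein, $K_Z=\pi^*K_{X_n}$, so $K^2_{X_n}=\frac{1}{|G|}\,K_Z^2=\frac{1}{n^2}\cdot 2\bigl(2g(C)-2\bigr)^2=2(n-3)^2$, using $2g(C)-2=n(n-3)$; as nodes are rational double points $K_{S_n}=\rho^*K_{X_n}$, whence $K^2_{S_n}=2(n-3)^2$, $\chi(\oo_{S_n})=\chi(\oo_{X_n})$ and $q(S_n)=q(X_n)$. Here $q(X_n)=\dim H^1(\oo_Z)^G=\dim\bigl(H^1(\oo_{C_1})\oplus H^1(\oo_{C_2})\bigr)^G=2\,h^1(\oo_{\PP^1})=0$, so $S_n$ is regular. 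For $\chi$ I would compute the Euler number $e(X_n)=\frac{1}{|G|}\sum_{v\in G}e(Z^v)$: by Step (i) only $v\in\{0,g_0^{n/2},g_1^{n/2},g_\infty^{n/2}\}$ satisfy $Z^v\neq\emptyset$, with $e(Z)=e(C)^2=n^2(n-3)^2$ and $e(Z^{g_i^{n/2}})=n^2$ (a product of two branch fibres); hence $e(X_n)=(n-3)^2+3$ and $e(S_n)=e(X_n)+6=n^2-6n+18$. Noether's formula $12\,\chi(\oo_{S_n})=K^2_{S_n}+e(S_n)$ then gives $\chi(\oo_{S_n})=\frac{n^2-6n+12}{4}$, and $p_g(S_n)=\chi(\oo_{S_n})-1+q(S_n)=\frac{n^2-6n+8}{4}=\left(\frac{n}{2}-2\right)\left(\frac{n}{2}-1\right)$. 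Finally $K_Z$, being the sum of the pullbacks to $Z$ of the ample divisors $K_{C_1}$ and $K_{C_2}$ (note $g(C)\ge 3$ for $n\ge 4$), is ample on the product of two curves of general type, so $K_{X_n}$ is ample and $K_{S_n}=\rho^*K_{X_n}$ is nef and big; nefness of $K_{S_n}$ rules out $(-1)$-curves, so $S_n$ is minimal, and a smooth surface with $K$ nef and $K^2>0$ is of general type, with canonical model $X_n$.

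The only genuinely delicate point is Step (i): the computation is elementary but one must keep the two stabilizer subgroups of $G$ separate (that is, correctly track the twist by $A$), and it is exactly here that both arithmetic hypotheses on $n$ enter --- "$n$ even" makes the three diagonal intersections of order $2$, so that $X_n$ is really singular as required for the application of Theorem \ref{catanese}, while "$3\nmid n$" makes $A$ invertible modulo $n$, hence the second $G$-action, and with it $X_n$ itself, well defined. Everything after Step (i) is routine bookkeeping.
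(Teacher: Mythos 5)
Your proposal is correct, and its first step coincides with the paper's: the same intersection pattern $\langle g_p\rangle\cap\langle h_q\rangle$ (trivial off the diagonal, of order $2$ on the diagonal precisely because $n$ is even), the same count of $n^2$ points over each $(p,p)$ splitting into $2$ orbits, hence six nodes. Where you diverge is in the invariants: the paper simply quotes the product-quotient machinery of \cite{BaCaGrPi12} for regularity and for the formulas $K^2_{S_n}=\frac{8(g_1-1)(g_2-1)}{|G|}$ and $\chi(\oo_{S_n})=\frac{K^2_{S_n}+6}{8}$ (the $+6$ being the correction coming from the six nodes), whereas you rederive everything from first principles: $K^2$ via $K_Z=\pi^*K_{X_n}$ for the quasi-\'etale quotient and $K_{S_n}=\rho^*K_{X_n}$ at the rational double points, $q=0$ via $H^1(\oo_{X_n})=H^1(\oo_Z)^G$ and K\"unneth, and $\chi$ via the fixed-point formula $e(X_n)=\frac{1}{|G|}\sum_{v\in G}e(Z^v)$ (only $v\in\{0,s_0,s_1,s_\infty\}$ contribute), the $+6$ from the resolution, and Noether's formula; all the arithmetic checks out. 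Your argument also makes explicit what the paper leaves to the cited references, namely that $K_{X_n}$ is ample (so $X_n$ is the canonical model, $S_n$ is minimal of general type); the paper's route is shorter and leans on established product-quotient formulas, while yours is self-contained and additionally records the auxiliary facts ($\pi$ unramified in codimension one, the nodes as $\mathbb{Z}/2$-quotient singularities, ampleness of $K_{X_n}$) that are used again later in the rigidity argument.
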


\begin{proof}
Note that $\langle g_p \rangle \cap \langle h_q \rangle=\{(0,0)\}$ for $p \neq q$ whereas $\langle g_p \rangle \cap \langle h_p \rangle \cong \ZZ/2\ZZ$. More precisely,  $\langle g_p \rangle \cap \langle h_p \rangle =\left\langle s_p \right\rangle$ where $s_0=\left( \frac{n}2 ,0 \right)$, $s_\infty = \left( 0,\frac{n}2 \right)$, $s_1 = 	\left(\frac{n}2 ,\frac{n}2 \right) $.

For all $p \in \{0,1,\infty \}$ there are $n^2$ points of $C \times C$ lying over  $(p,p)\in \PP^1 \times \PP^1$. Since $\langle g_p \rangle \cap \langle h_p \rangle$ has order $2$, they split in $\frac{n^2}{\frac{n^2}2}=2$ orbits, so producing each $2$ nodes on the quotients $X_n:=(C \times C)/G$. Hence  $X_n$ has exactly $3 \cdot 2=6$ nodes. 

By \cite{BaCaGrPi12} $S_n$ is regular and
\[
K^2_{S_n}=\frac{8(g_1-1)(g_2-1)}{|G|}=\frac{8 \left( \frac{n(n-3)}2\right)^2}{n^2}=2(n-3)^2,
\]
\[
\chi(\oo_{S_n})=\frac{K_{S_n}^2+6}8=\frac{2n^2-12n+24}8=\frac{n^2-6n+12}4,
\]
\[
p_g({S_n})=\frac{n^2-6n+8}4=\frac{(n-4)(n-2)}4=\left( \frac{n}2-2 \right)\left( \frac{n}2-1 \right).
\]

\end{proof}

\begin{remark}
$X_n$ are (singular models of) so-called {\em product-quotient} surfaces, introduced  in \cite{BaPi12}, \cite{BaCaGrPi12}.
\end{remark}


\section{The deformations of \texorpdfstring{$S_n$}{Sn} and \texorpdfstring{$X_n$}{Xn}}

This section is dedicated to the proof of our main result.

\begin{theorem}\label{main}
Let $n \in \NN$ be an even number $\geq 8$, not divisible by $3$. Then $S_n$ is rigid and $h^1(\Theta_{S_n}) =6$.

In particular, $S_n$ is an infinite series of minimal regular surfaces of general type with unbounded invariants which are rigid, but not infinitesimally rigid.
\end{theorem}

\begin{proof}
Set $Z:= C \times C$ and let  $\pi\colon Z \rightarrow X_n =Z/G$ be the quotient map.  

Since $C$ is a triangle curve, $h^1(\Theta_C)^G= 0$ and then $h^1(\Theta_Z)^G=2h^1(\Theta_C)^G=0$.

Since $\pi$ is unramified in codimension 1, the result follows by Corollary \ref{fabweak} if condition 2 in Theorem \ref{howtodoit} holds for $X_n$.

This will be proven in Proposition \ref{surjective}.
\end{proof}

The rest of the section is devoted to the proof that, under our assumptions, condition 2 in Theorem \ref{howtodoit} holds for $X_n$. 

More precisely,  we need to check that the six maps $\alpha_{\nu} \colon H^0(\Omega^1_{S_n} \otimes \Omega^2_{S_n})  \rightarrow \CC$ associated to the nodes of $X$  locally described in (\ref{kas}) are linearly independent in $H^0(\Omega^1_{S_n} \otimes \Omega^2_{S_n})^\vee$. 

We notice that it is sufficient to find a vector subspace $V \subset  H^0(\Omega^1_{S_n} \otimes \Omega^2_{S_n})$ such that the restrictions to $V$ of our six maps are linearly independent in $V^\vee$. Since this implies $\dim V \geq 6$, we will choose six linearly independent vectors in $H^0(\Omega^1_{S_n} \otimes \Omega^2_{S_n})$ and take the vector subspace they generate.

First observe that by \cite{Ca89}*{Proposition 1.6} we know that
$$
H^0(\Omega^1_{S_n} \otimes \Omega^2_{S_n}) \cong H^0(\Omega^1_{C \times C} \otimes \Omega^2_{C \times C}  )^G.
$$

By the K\"unneth formula (cf. \cite{kaup}) we have:
\begin{equation}\label{kuennethdecomposition}
H^0(\Omega^1_{C \times C} \otimes \Omega^2_{C \times C}  )
=   \left(
H^0(\omega^{\otimes 2}_{C})\otimes
H^0(\omega_{C})\right)
\oplus \left(
H^0(\omega_{C}) \otimes
H^0(\omega^{\otimes 2}_{C})
\right) .
\end{equation}

Under this isomorphism, a section 
$
f_1(z_1)(dz_1)^2 \otimes f_2(z_2) dz_2 + g_1(z_1)dz_1 \otimes g_2(z_2)(dz_2)^2 
$
in $ \left(
H^0(\omega^{\otimes 2}_{C})\otimes
H^0(\omega_{C})\right)
\oplus \left(
H^0(\omega_{C}) \otimes
H^0(\omega^{\otimes 2}_{C})
\right)$
corresponds to  the section
\[(f_1f_2dz_1 + g_1g_2dz_2) \otimes (dz_1 \wedge dz_2) \in H^0(\Omega^1_{C \times C} \otimes \Omega^2_{C \times C}  ).
\] 
This will be useful to compute the Kas maps (\ref{kas}) in the six vectors we are going to choose.

The group $G$ acts on both sides of equation (\ref{kuennethdecomposition}) producing a K\"unneth decomposition of each eigenspace $H^0(\Omega^1_{C \times C} \otimes \Omega^2_{C \times C}  )^{(\chi)}$. The result for the $G$-invariant part is, by Remark \ref{twisting},
\begin{multline}\label{kuennethdecomposition2}
H^0(\Omega^1_{C \times C} \otimes \Omega^2_{C \times C}  )^G=\\
=\bigoplus_{\chi \in G^\vee} \left(  \left(
H^0(\omega^{\otimes 2}_{C})^{(\chi)} \otimes
H^0(\omega_{C})^{(-\chi')} \right)
\oplus \left(
H^0(\omega_{C})^{(\chi)} \otimes
H^0(\omega^{\otimes 2}_{C})^{(-\chi')}
\right) \right)
\end{multline}
where, if $\chi=(\alpha,\beta)$, $-\chi'=(\alpha',\beta')$ with $\alpha'$, $\beta'$ as in (\ref{twist}).

If we find six different characters  $\chi=(\alpha,\beta)$ such that both $H^0(\omega^{\otimes 2}_{C})^{(\chi)}$ and $H^0(\omega_{C})^{(-\chi')}$ do not vanish, we can pick one general element in each summand $H^0(\omega^{\otimes 2}_{C})^{(\chi)} \otimes H^0(\omega_{C})^{(-\chi')}$ to form a set of six linearly independent vectors of $H^0(\Omega^1_{S_n} \otimes \Omega^2_{S_n})$. Computing the six Kas maps corresponding to the nodes on them we can check condition 2 in Theorem \ref{howtodoit}  for $X_n$. 

We will see that a reasonable choice for the characters is the following: 
\begin{definition}
We take as set of ``good" characters $\mathcal{C} :=\{\chi_0, \tilde{\chi}_0, \chi_{\infty}, \tilde{\chi}_\infty, \chi_1, \tilde{\chi}_1\}$, where
\begin{align*}
\chi_0&=(2,n-2)&\chi_\infty&=(0,2)&\chi_1&=(n-2,0)\\
\tilde{\chi}_0&=\left(2, \frac{n}{2}-2\right)&\tilde{\chi}_\infty&=\left( \frac{n}{2},2\right)&\tilde{\chi}_1&=\left( \frac{n}{2}-2, \frac{n}{2}\right).
\end{align*}
\end{definition}
\begin{remark}
By (\ref{twist}) we get that
\begin{align*}
-\chi'_0&=(2,2)&-\chi'_\infty&=(n-4,2)&-\chi'_1&=(2,n-4)\\
-\tilde{\chi}'_0&=\left(2, \frac{n}{2}+2\right)&-\tilde{\chi}'_\infty&=\left( \frac{n}{2}-4,2\right)&-\tilde{\chi}'_1&=\left( \frac{n}{2}+2, \frac{n}{2}-4\right).
\end{align*}
\end{remark}

Looking at  Figures  \ref{fig:omega1} and \ref{fig:omega2} we notice that for all $\chi \in \mathcal{C}$ we have that 
$$H^0(\omega_C^{\otimes 2}) ^{(\chi)} \otimes H^0(\omega_C) ^{(-\chi')} \neq 0.$$
We then take general elements
\begin{align*}
\omega_p &\in H^0(\omega_C^{\otimes 2}) ^{(\chi_p)} \otimes H^0(\omega_C) ^{(-\chi'_p)}, &
\tilde{\omega}_p&\in H^0(\omega_C^{\otimes 2}) ^{(\tilde{\chi}_p)} \otimes H^0(\omega_C) ^{(-\tilde{\chi}'_p)},
\end{align*}
and we want to compute the Kas map of  each of the six nodes evaluated at them. This is done  pulling  these forms back to a local double cover of the node, then choose local coordinates in it, and use (\ref{kas}).

For each $p \in \{0,1, \infty \}$, we choose $q_p \in C$ lying over $p$. Then a neighbourhood of $(q_p, q_p) $ in $C \times C$ provides a local double cover  of a node $\nu_p$  of $X_n$, allowing us to compute the Kas maps in three of the six nodes of $X_n$.  
For this we need to choose suitable local coordinates in $(q_p,q_p)$.
\begin{remark} \label{chidig}
Observe that by \cite{Par91} the natural isomorphism
$$H^0((p_* \omega_C ^{\otimes k})^{\chi}) \ra H^0(\omega_C^{\otimes k})^{\chi}$$
is given by $\sigma \mapsto p^*\sigma \otimes \rho_k^{\chi}$, where the divisor of $\rho_k^{\chi}$ is given by 
$$(\rho_k^{\chi}) = \sum_{q \in R} a_k^{\chi}(q) q,$$
where $R=  \sum R_{H,\psi}$ is the ramification divisor of $p \colon C \ra \PP^1$, and $0 \leq a_k^{\chi}(q) \leq |H|-1$. 
In fact, as in the proof of Proposition \ref{bicanonicalsplitting}, if $z$ is a local coordinate near $q \in R_{H,\psi}$ such that for each $h \in H$ it holds $h^*z = \psi(h)z$, then a global section $\omega \in H^0(\omega_C^{\otimes k})^{\chi}$ is locally of the form 
$$\lambda z^{a_k^{\chi}(q)}(dz)^k \mod z^ {|H|}.$$
$a_k^{\chi}(q)$ is then determined by the formula 
$$
\chi(h) = \psi(h)^{a_k^{\chi}(q) + k}, \ \  \forall h \in H.
$$
Note moreover that,  if $H^0(\omega_C^{\otimes k})^{\chi}\neq 0$, a general $\omega$ in it has $\lambda \neq 0$, since otherwise $p_* \omega_C^{\otimes k}$ would have a base point, which is impossible for a line bundle on $\PP^1$.

\end{remark}

Doubling the above local coordinate $z$, i.e., choosing such a local coordinate for each factor of $C \times C$ we obtain coordinates $(x_p,y_p)$ near $(q_p,q_p) \in C \times C$ such that  for all $\chi \in {\mathcal C}$ a  general
\[
\omega \in H^0(\omega_C^{\otimes 2}) ^{(\chi)} \otimes H^0(\omega_C) ^{(-\chi')}
\]
has the form $\lambda x_p^{a_2^{\chi}(q)}y_p^{a_1^{-\chi'}(q)}(dx_p)^2 \otimes dy_p \mod (x_p^n,y_p^n)$.

By (\ref{kas}) it follows that $\alpha_{\nu_p}(\omega) \neq 0$ if and only if $a_1^{-\chi'}(q_p)=1$ and $a_2^{\chi}(q_p)=0$.

\begin{proposition}\label{ThreeKasMaps}
\begin{align*}
\alpha_{\nu_p}(\omega_{p'}) \neq 0 &\Leftrightarrow p=p',&
\alpha_{\nu_p}(\tilde{\omega}_{p'}) \neq 0 &\Leftrightarrow p=p'.
\end{align*}
\end{proposition}
\begin{proof}
Setting $\chi = (\alpha, \beta)$ we get: 
\begin{align*}
\chi(g_0)  =& \alpha = a_k^{\chi}(q_0)+k && \iff & a_k^{\chi}(q_0) &= \alpha -k  &\mod& n,\\
\chi(g_\infty)  =& \beta = a_k^{\chi}(q_\infty)+k && \iff & a_k^{\chi}(q_\infty) &= \beta -k  &\mod& n,\\
\chi(g_1)  =& -\alpha-\beta = a_k^{\chi}(q_1)+k && \iff & a_k^{\chi}(q_1) &= -\alpha-\beta -k  &\mod& n.\\
\end{align*}
We have just seen that for all $\chi \in {\mathcal C}$,  a  general
$
\omega \in H^0(\omega_C^{\otimes 2}) ^{(\chi)} \otimes H^0(\omega_C) ^{(-\chi')}
$ is not in the kernel of $\alpha_{\nu_p}$ if and only if $a_1^{-\chi'}(q_p)=1$ and $a_2^{\chi}(q_p)=0$.

We solve then the system $a_1^{-\chi'}(q_p)-1=a_2^{\chi}(q_p)=0$ for all $p \in \{0,1, \infty \}$. For this we recall first that by (\ref{twist}) $-\chi' = (\alpha', \beta')$
 with
$$
\alpha' = - \alpha -2 \beta, \ \ \beta' = 2 \alpha + \beta.
$$
Then we get :

$\underline{p=0}:$ the system is 
$$
 \alpha = \alpha' = 2 \Rightarrow 2 =  -2 - 2 \beta \Rightarrow 2\beta = -4, \beta' = \beta+4,
$$
and it has exactly two solutions, $\chi_0$ and $\tilde{\chi}_0$.

$\underline{p=\infty}:$ the system is 
$$
 \beta =\beta' = 2  \Rightarrow 2 = 2 \alpha +2 \Rightarrow 2 \alpha = 0, \alpha' = -\alpha-4,
$$
and it has exactly two solutions, $\chi_\infty$ and $\tilde{\chi}_\infty$.

$\underline{p=1}:$ the system is 
$$
 -\alpha-\beta =-\alpha'-\beta'  = 2 \Rightarrow 2\beta = 0, \alpha = -\beta -2,  \beta'= \beta -4, \alpha' = \beta+2,
$$
and it has exactly two solutions, $\chi_1$ and $\tilde{\chi}_1$.

Now the claim follows immediately.
\end{proof}

Next we want to consider the other three nodes of $X_n$.

\begin{remark}\label{differentnode}
Let be $g, h \in G$, then $x'_p:=(g^{-1})^*x_p$ is a coordinate around $gq_p$ and $y_p':=(h^{-1})^*y_p$ is a coordinate around $hq_p$.  

If $\omega \in H^0(\omega_C^{\otimes 2}) ^{(\chi)} \otimes H^0(\omega_C) ^{(-\chi')} $, then 
$
(g^{-1}, h^{-1})^* \omega = \chi(g^{-1}) (\chi')^{-1}(h^{-1}) \omega
$. Writing locally near $(q_p,q_p)$
$$
\omega=f(x_p,y_p)(dx_p)^2\otimes dy_p,
$$
it follows that
$$
\omega= \frac{\chi(g)}{\chi'(h)}(g^{-1}, h^{-1})^* \omega= 
 \frac{\chi(g)}{\chi'(h)} f(x'_p,y'_p)(dx'_p)^2\otimes dy'_p.
$$
This shows that if we have computed $\alpha_{\nu_p}(\omega) = \lambda$ by (\ref{kas}) using the coordinates $(x_p,y_p)$, and if $\nu'$ is the node dominated by $(gq_p,hq_p)$ we obtain that $\alpha_{\nu'}(\omega)$ (computed by (\ref{kas}) using the coordinates $(x'_p,y'_p)$) equals   
$$\frac{\chi(g)}{\chi'(h)}\cdot \lambda.$$
\end{remark}

It is worth noticing that there are elements $(g,h)$ such that $(gq_p,hq_p)$ dominates $\nu_p$ and $\chi(g)\neq \chi'(h)$. In this case we get a different value of $\alpha_{\nu_p}(\omega)$ by applying  (\ref{kas}) with the coordinates $(x'_p,y'_p)$ instead of $(x_p,y_p)$. This is not a contradiction, as it corresponds to a change of the choice of $\theta_i$: in fact in that case the value of the Kas map in all possible forms $\omega$ will be rescaled by the same factor. This will be no more true when $(gq_p,hq_p)$  dominates a different node, as we will see in the proof of Proposition \ref{surjective}.

We are now ready to finish the proof of Theorem \ref{main}.
\begin{proposition} \label{surjective}
Let $V:=\left\langle \omega_0,\tilde{\omega}_0, \omega_\infty,\tilde{\omega}_\infty, \omega_1,\tilde{\omega}_1\right\rangle$.
The restriction to $V$ of the Kas maps of the six nodes of $X_n$ are linearly independent.

In particular, condition 2) of Theorem \ref{howtodoit} holds.
\end{proposition}

\begin{proof} First we compute the Kas maps of the three nodes $\nu_p$. 

By Proposition \ref{ThreeKasMaps}, we know that $\alpha_{\nu_p}(\omega_p)\neq 0$. Rescaling $\omega_p$ we can assume  without loss of generality  $\alpha_{\nu_p}(\omega_p)=1$. Similarly we can assume $\alpha_{\nu_p}(\tilde{\omega}_p)=1$
and then Proposition \ref{ThreeKasMaps} gives
\begin{align*}
 (\alpha_{\nu_0})(\omega_0,\tilde{\omega}_0,\omega_1,\tilde{\omega}_1,\omega_{\infty},\tilde{\omega}_{\infty})=&(1,1,0,0,0,0),\\
 (\alpha_{\nu_1})(\omega_0,\tilde{\omega}_0,\omega_1,\tilde{\omega}_1,\omega_{\infty},\tilde{\omega}_{\infty})=&(0,0,1,1,0,0),\\
(\alpha_{\nu_\infty})(\omega_0,\tilde{\omega}_0,\omega_1,\tilde{\omega}_1,\omega_{\infty},\tilde{\omega}_{\infty})=&(0,0,0,0,1,1).
  \end{align*}
We fix now $k_0=(1,1)$,  $k_{\infty}=(1,0)$, and $k_1 =\ (0,1) \in G$ and observe that for all $p \in \{0,1, \infty\}$ we have $\chi_p(k_p) = 1$, $\tilde{\chi}_p (k_p)= -1$. (Note that here the characters are written multiplicatively.)

Denoting $k_pq_p$ by $\tilde{q_p}$, we have $(\tilde{q_p}, q_p) = (k_p,0)(q_p,q_p)$. Denoting by $\tilde{\nu}_p$ the node in $X_n$ dominated by 
$(\tilde{q_p}, q_p)$, we see that in the coordinates obtained by pullback via $(k_p,0)$ from $(q_p,q_p)$ we have
$$
\alpha_{\tilde{\nu}_p}(\omega_{p'}) = \delta_{pp'}, \  \ \alpha_{\tilde{\nu}_p}(\tilde{\omega}_{p'}) = - \delta_{pp'}
$$
where $\delta_{pp'}$  is the usual Kronecker delta equal to $1$ if $p=p'$ and $0$ if $p\neq p'$.
  
This implies first of all that $\nu_p \neq \tilde{\nu}_p$ since the values of the Kas map of  $\nu_p$  in $\omega_p$ and $\tilde{\omega}_p$ coincide. Moreover, it follows  that 
 \begin{align*}
 (\alpha_{\tilde{\nu}_0})(\omega_0,\tilde{\omega}_0,\omega_1,\tilde{\omega}_1,\omega_{\infty},\tilde{\omega}_{\infty})=&(1,-1,0,0,0,0),\\
 (\alpha_{\tilde{\nu}_1})(\omega_0,\tilde{\omega}_0,\omega_1,\tilde{\omega}_1,\omega_{\infty},\tilde{\omega}_{\infty})=&(0,0,1,-1,0,0),\\
 (\alpha_{\tilde{\nu}_\infty})(\omega_0,\tilde{\omega}_0,\omega_1,\tilde{\omega}_1,\omega_{\infty},\tilde{\omega}_{\infty})=&(0,0,0,0,1,-1).
  \end{align*}

  This concludes the proof.
  \end{proof}

\section{Higher dimensional examples}

The aim of this section is to give examples of rigid compact complex manifolds which are not infinitesimally rigid  in all dimensions $d \geq 3$.

The main result is the following

\begin{theorem}\label{higherdim}
Let $n \geq 8$ be an even integer such that $3 \nmid n$, and let $X$ be a compact complex rigid manifold.

Then $S_n \times X$ is rigid, but not infinitesimally rigid. 

In particular there are rigid, but not infinitesimally rigid, manifolds of dimension $d$ and Kodaira dimension $\kappa$ for all possible pairs $(d,\kappa)$ with $d \geq 5$ and $\kappa \neq 0,1,3$ and for $(d,\kappa)=(3,-\infty),(4,-\infty),(4,4)$.
\end{theorem}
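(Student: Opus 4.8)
The plan is to separate the statement into two parts: first, the abstract assertion that $S_n\times X$ is rigid but not infinitesimally rigid for any rigid compact complex manifold $X$; and second, the realization of all the listed pairs $(d,\kappa)$. For the first part I would invoke the product decomposition of deformation theory. By the Künneth formula for tangent cohomology (or directly from $\Theta_{S_n\times X}\cong pr_1^*\Theta_{S_n}\oplus pr_2^*\Theta_X$ and $H^0(\mathcal O)=\CC$ on connected manifolds), one gets $H^1(\Theta_{S_n\times X})\cong H^1(\Theta_{S_n})\oplus H^1(\Theta_X)$; since $X$ is rigid it may still have $H^1(\Theta_X)\neq 0$, but regardless, by Theorem \ref{main} $H^1(\Theta_{S_n})\cong\CC^6\neq 0$, so $S_n\times X$ is \emph{not} infinitesimally rigid. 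For rigidity I would appeal to the principle that the Kuranishi family of a product is the product of the Kuranishi families: $\Def(S_n\times X)$ is (as a germ) $\Def(S_n)\times\Def(X)$. By Theorem \ref{main}(3), $S_n$ is rigid, so $\Def(S_n)$ is a point (as a reduced space; its possible non-reducedness is exactly the content of $R$ in Theorem \ref{catanese}, but the underlying reduced germ is a point, hence all nearby fibres of any deformation are $\cong S_n$); similarly $\Def(X)$ has $X$ as the only fibre up to isomorphism. Therefore every small deformation of $S_n\times X$ has fibres isomorphic to $S_n\times X$, i.e.\ $S_n\times X$ is rigid. The one point requiring a little care here is that "rigid" is a statement about isomorphism type of fibres, not about the scheme structure of $\Def$, so I must phrase the product argument on the level of fibres: a deformation of $S_n\times X$ restricts, after possibly shrinking the base, to simultaneous deformations of the two factors (using that both factors are rigid and that the product structure is itself unobstructed and rigid — this is where one cites the relevant product lemma for deformations of manifolds, e.g.\ the fact that for $X$ rigid and $Y$ with $\Def(Y)$ having reduced point support, $X\times Y$ is rigid).

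For the second part I would first settle the Kodaira dimension of $S_n\times X$: by the easy additivity $\kod(S_n\times X)=\kod(S_n)+\kod(X)=2+\kod(X)$, since $S_n$ is of general type (i.e.\ $\kod(S_n)=2$). So given a target pair $(d,\kappa)$ I need a rigid compact complex manifold $X$ of dimension $d-2$ and Kodaira dimension $\kappa-2$. For $d\geq 5$ and $\kappa\notin\{0,1,3\}$: then $d-2\geq 3$ and $\kappa-2\in\{-\infty\}\cup\{-2,\dots\}$, but the meaningful values are $\kappa-2=-\infty$ or $\kappa-2\in\{0,2,3,\dots,d-2\}$ (note $\kappa\neq 1,3$ forces $\kappa-2\neq -1,1$, and $\kappa\neq 0$ rules out $\kappa-2=-2$ which is not a Kodaira dimension anyway; the case $\kappa=2$ gives $\kappa-2=0$ which is allowed). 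These are precisely the pairs $(d-2,\kappa-2)$ covered by the second cited theorem from \cite{rigidity} (rigid projective $X$ of dimension $n\geq 3$ and every $k=-\infty,0,2,\dots,n$). I would spell out the bookkeeping: $\kappa=-\infty\Rightarrow$ take $X$ rigid of dim $d-2$ with $\kod=-\infty$; $\kappa=2\Rightarrow$ take $X$ with $\kod=0$; $\kappa\in\{4,5,\dots,d\}\Rightarrow$ take $X$ with $\kod=\kappa-2\in\{2,\dots,d-2\}$; all available by \cite{rigidity}. For the exceptional low-dimensional pairs $(3,-\infty),(4,-\infty),(4,4)$: take $X=\PP^1$ (dim $1$, $\kod=-\infty$, rigid) to get $(d,\kappa)=(3,-\infty)$; take $X=\PP^1\times\PP^1$ or $\PP^2$ (dim $2$, $\kod=-\infty$, infinitesimally rigid hence rigid) to get $(4,-\infty)$; and take $X=S_m$ for some admissible $m$ (dim $2$, $\kod=2$, rigid by Theorem \ref{main}) to get $(4,4)$ — indeed $S_n\times S_m$ works, or $S_n\times$ (any rigid surface of general type).

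The main obstacle I anticipate is making the rigidity half of the first part fully rigorous, namely the claim that a product of two rigid compact complex manifolds is rigid. This is intuitively clear but the honest proof requires knowing that the Kuranishi space of $X\times Y$ is the product of those of $X$ and $Y$ — a statement that in the compact complex (non-projective, possibly non-Kähler) generality needs the Künneth decomposition $H^1(\Theta_{X\times Y})=H^1(\Theta_X)\oplus H^1(\Theta_Y)$ together with $H^0(\Theta_X)\otimes H^1(\Oh_Y)$ and $H^1(\Oh_X)\otimes H^0(\Theta_Y)$ type terms vanishing from the relevant obstruction comparison, and control of the obstruction map. I would either cite a clean reference for "product of rigid is rigid" (this is folklore and is used implicitly in \cite{rigidity}) or argue directly: given a small deformation $f\colon(\mathfrak X, S_n\times X)\to(\sB,b_0)$, the two projections of the central fibre deform the (rigid) factors, so after shrinking $\sB$ the Albanese-type / Stein-factorization arguments produce morphisms $\mathfrak X\to S_n\times\sB$ and $\mathfrak X\to X\times\sB$ over $\sB$ whose product is a fibrewise isomorphism. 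Everything else (Kodaira dimension additivity, the combinatorial check that the listed $(d,\kappa)$ exhaust what the cited existence theorems provide) is routine.
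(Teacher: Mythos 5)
Your overall strategy (Künneth for the infinitesimal part, a product decomposition of the Kuranishi family for rigidity, then the same bookkeeping with $(\PP^1)^{d-2}$, $S_m$ and the rigid examples of \cite{rigidity}) is the paper's strategy, and the second half of your argument is fine. The genuine gap is in the rigidity step, which is exactly where the paper does its real work. You assert $\Def(S_n\times X)=\Def(S_n)\times\Def(X)$ and propose to justify it either by citing ``product of rigid is rigid'' as folklore or by a sketchy fibrewise argument; neither is adequate. The paper proves a precise Lemma: \emph{if} $H^1(\Theta_{X\times Y})=H^1(\Theta_X)\oplus H^1(\Theta_Y)$, i.e.\ if the mixed Künneth terms $H^0(\Theta_X)\otimes H^1(\hol_Y)$ and $H^1(\hol_X)\otimes H^0(\Theta_Y)$ vanish, then the Kuranishi space of the product is the product of the Kuranishi spaces, and the proof is not formal: it goes through the Kuranishi construction with compatible Hermitian metrics, $\bar\partial^*$, Green and harmonic operators, showing the solution $\varphi(t,s)$ of the Kuranishi equation splits as $\varphi_1(t)+\varphi_2(s)$. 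Without the mixed-term hypothesis the ``folklore'' statement is not safe at all: classes in $H^0(\Theta_X)\otimes H^1(\hol_Y)$ are first-order deformations towards twisted fibre bundles with fibre $X$, which is precisely the mechanism by which a product of rigid factors could deform; your alternative ``projections of the central fibre deform the factors'' argument begs the question, since nearby fibres need not carry a product (or even bundle) structure a priori.

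Moreover, you never verify the hypothesis that makes the Lemma applicable here, namely $H^1(\hol_{S_n})=0$ (regularity of $S_n$) and $H^0(\Theta_{S_n})=0$ (general type), which are what kill the mixed terms; in fact your claimed derivation of $H^1(\Theta_{S_n\times X})\cong H^1(\Theta_{S_n})\oplus H^1(\Theta_X)$ from $\Theta_{S_n\times X}\cong pr_1^*\Theta_{S_n}\oplus pr_2^*\Theta_X$ and $H^0(\hol)=\CC$ alone is incorrect, since Künneth also produces $H^0(\Theta_{S_n})\otimes H^1(\hol_X)$ and $H^1(\hol_{S_n})\otimes H^0(\Theta_X)$. (For the mere non-vanishing of $H^1(\Theta_{S_n\times X})$ this does not matter, but for the rigidity step the exact splitting is the whole point.) To close the gap you must state and prove the product lemma under the mixed-term vanishing hypothesis -- or cite an actual proof -- and record the two vanishings for $S_n$ that put you in its range of application; the Kodaira-dimension bookkeeping and the choices of $X$ in the last part are correct and coincide with the paper's.
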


Let $X$ and $Y$ be compact complex manifolds. Then by the K\"unneth formula (cf. \cite{kaup})  we have:
\begin{multline*}
H^1(\Theta_{X\times Y}) = H^1(\Theta_X) \oplus (H^0(\Theta_X) \otimes H^1(\hol_Y)) \oplus \\
\oplus (H^1(\hol_X) \otimes H^0(\Theta_Y)) \oplus H^1(\Theta_Y).
\end{multline*}

Before proving the Theorem we need the following result, which is probably wellknown. For lack of a suitable reference we will give a sketch of proof.

\begin{lemma}\label{wellknown}
Let $X$, $Y$ be compact complex manifolds, such that 
\begin{equation}\label{h0h1=0}
h^0(\Theta_X) h^1(\hol_Y)=h^0(\Theta_Y)h^1(\hol_X) =0.
\end{equation}
Then $\Def(X \times Y) = \Def(X) \times \Def(Y)$.
\end{lemma}

\begin{proof}
Consider the commutative diagram
\[
\begin{xy}
  \xymatrix{
     H^1(\Theta_X) \oplus  H^1(\Theta_Y)\ar[rr]^{\alpha_1}\ar[d]_{k_X \oplus k_Y}  &  & H^1(\Theta_{X\times Y})\ar[d]^{k_{X \times Y}} \\   
		H^2(\Theta_X) \oplus  H^2(\Theta_Y) \ar[rr]_{\alpha_2}  &   & H^2(\Theta_{X\times Y}).
		}
\end{xy}
\]
where the horizontal map $\alpha_j$ are the natural inclusions given by the  K\"unneth formula, and the maps $k_\bullet \colon H^1(\Theta_\bullet) \rightarrow  H^2(\Theta_\bullet)$ are the obstruction maps defining $\Def(\bullet):=k_\bullet^{-1}(0)$.

Condition (\ref{h0h1=0}) means that $\alpha_1$ is an isomorphism. Since $\alpha_2$ is injective, $\alpha_1$ maps $ \Def (X) \times \Def (Y)= (k_X \oplus k_Y)^{-1}(0)$ isomorphically onto $\Def (X \times Y)=k_{X \times Y}^{-1}(0)$.
\end{proof}

\begin{proof}[Proof of Theorem \ref{higherdim}]
Observe that $H^1(\hol_{S_n}) = H^0(\Theta_{S_n}) = 0$. Therefore applying the K\"unneth formula we obtain:
$$
H^1(\Theta_{S_n \times X}) = H^1(\Theta_{S_n}) \oplus H^1(\Theta_X) \neq \{0\}.
$$

Then $S_n \times X$ is not infinitesimally rigid. Moreover, by the above Lemma, we have that
$$
\Def(S_n \times X) = \Def(S_n) \times \Def(X).
$$
Therefore, since $S_n$ and $X$ are rigid, also $S_n \times X$ is rigid.

Choosing $X=(\PP^1)^{d-2}$ we get examples for all dimension $d \geq 3$ with $\kappa=-\infty$.

Choosing $X=S_m$ ($m \geq 8$ even with $3 \nmid m$) we get  examples with $(d,\kappa) =(4,4)$.

Choosing a rigid manifold $X$ of Kodaira dimension $\kappa \in \{0, 2, \ldots, \dim X \}$ (cf. \cite{rigidity}*{Theorems 3.4, 3.5}), we get rigid and not infinitesimally rigid examples for all dimensions $d \geq 5$ and all possible Kodaira dimensions except $0,1,3$.
\end{proof}

\begin{remark}
Observe that, since rigid manifolds of general type are globally rigid, we found globally rigid manifolds which are not infinitesimally rigid of every dimension $d \geq 2$, $d \neq 3$.
\end{remark}

\begin{bibdiv}
\begin{biblist}

\bib{fano}{article}{
   author={Bauer, Ingrid C.},
   author={Catanese, Fabrizio},
   title={Some new surfaces with $p_g=q=0$},
   conference={
      title={The Fano Conference},
   },
   book={
      publisher={Univ. Torino, Turin},
   },
   date={2004},
   pages={123--142},
   review={\MR{2112572}},
}

\bib{isogenous}{article}{
   author={Bauer, I. C.},
   author={Catanese, F.},
   author={Grunewald, F.},
   title={The classification of surfaces with $p_g=q=0$ isogenous to a
   product of curves},
   journal={Pure Appl. Math. Q.},
   volume={4},
   date={2008},
   number={2, Special Issue: In honor of Fedor Bogomolov},
   pages={547--586},
   issn={1558-8599},
   review={\MR{2400886}},
   doi={10.4310/PAMQ.2008.v4.n2.a10},
}

\bib{BaCaGrPi12}{article}{
   author={Bauer, Ingrid},
   author={Catanese, Fabrizio},
   author={Grunewald, Fritz},
   author={Pignatelli, Roberto},
   title={Quotients of products of curves, new surfaces with $p_g=0$ and
   their fundamental groups},
   journal={Amer. J. Math.},
   volume={134},
   date={2012},
   number={4},
   pages={993--1049},
   issn={0002-9327},
   review={\MR{2956256}},
   doi={10.1353/ajm.2012.0029},
}

\bib{BaPi12}{article}{
   author={Bauer, I.},
   author={Pignatelli, R.},
   title={The classification of minimal product-quotient surfaces with
   $p_g=0$},
   journal={Math. Comp.},
   volume={81},
   date={2012},
   number={280},
   pages={2389--2418},
   issn={0025-5718},
   review={\MR{2945163}},
   doi={10.1090/S0025-5718-2012-02604-4},
}

\bib{BaPi16}{article}{
   author={Bauer, Ingrid},
   author={Pignatelli, Roberto},
   title={Product-quotient surfaces: new invariants and algorithms},
   journal={Groups Geom. Dyn.},
   volume={10},
   date={2016},
   number={1},
   pages={319--363},
   issn={1661-7207},
   review={\MR{3460339}},
   doi={10.4171/GGD/351},
}

\bib{rigidity}{article}{
   author={Bauer, I.},
   author={Catanese, F.},
    title={On rigid compact complex surfaces and manifolds},
  journal={Adv. Math.},
   volume={333},
   date={2018},
   pages={620--669},
   doi={10.1016/j.aim.2018.05.041},
}

\bib{b-w}{article}{
   author={Burns, D. M., Jr.},
   author={Wahl, Jonathan M.},
   title={Local contributions to global deformations of surfaces},
   journal={Invent. Math.},
   volume={26},
   date={1974},
   pages={67--88},
   issn={0020-9910},
   review={\MR{0349675}},
}

\bib{Ca89}{article}{
   author={Catanese, F.},
   title={Everywhere nonreduced moduli spaces},
   journal={Invent. Math.},
   volume={98},
   date={1989},
   number={2},
   pages={293--310},
   issn={0020-9910},
   review={\MR{1016266}},
   doi={10.1007/BF01388855},
}

\bib{kas}{article}{
   author={Kas, Arnold},
   title={Ordinary double points and obstructed surfaces},
   journal={Topology},
   volume={16},
   date={1977},
   number={1},
   pages={51--64},
   issn={0040-9383},
   review={\MR{0440074}},
   doi={10.1016/0040-9383(77)90030-1},
}

\bib{kaup}{article}{
   author={Kaup, Ludger},
   title={Eine K\"unnethformel f\"ur Fr\'echetgarben},
   language={German},
   journal={Math. Z.},
   volume={97},
   date={1967},
   pages={158--168},
   issn={0025-5874},
   review={\MR{0216033}},
   doi={10.1007/BF01111357},
}

\bib{kodairamorrow}{book}{
   author={Morrow, James},
   author={Kodaira, Kunihiko},
   title={Complex manifolds},
   publisher={Holt, Rinehart and Winston, Inc., New York-Montreal,
   Que.-London},
   date={1971},
   pages={vii+192},
   review={\MR{0302937}},
}

\bib{Par91}{article}{
   author={Pardini, Rita},
   title={Abelian covers of algebraic varieties},
   journal={J. Reine Angew. Math.},
   volume={417},
   date={1991},
   pages={191--213},
   issn={0075-4102},
   review={\MR{1103912 (92g:14012)}},
   doi={10.1515/crll.1991.417.191},
}

\bib{Pinkham}{article}{
   author={Pinkham, Henry},
   title={Some local obstructions to deforming global surfaces},
   note={Leopoldina Symposium: Singularities (Th\"uringen, 1978)},
   journal={Nova Acta Leopoldina (N.F.)},
   volume={52},
   date={1981},
   number={240},
   pages={173--178},
   issn={0369-5034},
   review={\MR{642704}},
}

\bib{murphy}{article}{
   author={Vakil, Ravi},
   title={Murphy's law in algebraic geometry: badly-behaved deformation
   spaces},
   journal={Invent. Math.},
   volume={164},
   date={2006},
   number={3},
   pages={569--590},
   issn={0020-9910},
   review={\MR{2227692}},
   doi={10.1007/s00222-005-0481-9},
}

\end{biblist}
\end{bibdiv}

\end{document}